\setlist[enumerate]{label=\textit{(\roman*)},ref=(\roman*),align=left, leftmargin=*}
\newtheorem{definition}{Definition}[section]
\newtheorem{theorem}[definition]{Theorem}
\newtheorem{proposition}[definition]{Proposition}
\newtheorem{lemma}[definition]{Lemma}
\newtheorem{remark}[definition]{Remark}
\newtheorem{example}[definition]{Example}
\numberwithin{equation}{section}
\newcommand{\eps}{\varepsilon}
\newcommand{\ep}{\varepsilon}
\renewcommand{\epsilon}{\eps}
\renewcommand{\phi}{\varphi}
\newcommand{\N}{\mathbb{N}}
\newcommand{\R}{\mathbb{R}}
\newcommand{\Rd}{\R^d}
\newcommand{\de}{\mathrm{d}}
\newcommand{\vass}[1]{\left\lvert#1\right\rvert}
\newcommand{\norm}[1]{\left\lVert#1\right\rVert}
\newcommand{\id}{\mathrm{id}}
\DeclareMathOperator{\Per}{Per}
\newcommand{\PerK}{\Per_K}
\title{Convergence of nonlocal geometric flows to~anisotropic~mean~curvature~motion}
\author[A. Cesaroni]{Annalisa Cesaroni}
\address[A. Cesaroni]{
	Department of Statistical Sciences,
	Universit\`a di Padova,
	Via Battisti 241/243, 35121 Padova, Italy.
	Email: \href{mailto:annalisa.cesaroni@unipd.it}{\tt annalisa.cesaroni@unipd.it}.
}
\author[V. Pagliari]{Valerio Pagliari}
\address[V. Pagliari]{
	Institute of Analysis and Scientific Computing,
	TU Wien,
	Wiedner Hauptstrasse 8-10, 1040 Vienna, Austria.
	Email: \href{mailto:valerio.pagliari@tuwien.ac.at}{\tt valerio.pagliari@tuwien.ac.at}.
}
\date{\today}
\begin{document}

\maketitle
\begin{abstract}
	We consider  nonlocal curvature functionals
	associated with positive interaction kernels, and we show that
	local anisotropic mean curvature functionals can be retrieved
	in a blow-up limit from them.
	As a consequence, we  prove that
	the  viscosity solutions to the rescaled  nonlocal  geometric flows  
	locally uniformly converge to the viscosity solution
	to the anisotropic mean curvature motion.
	The result is achieved by combining
	a compactness argument and a set-theoretic approach
	related to the theory of De Giorgi's barriers for evolution equations.
	
	\medskip
	
	\noindent
	{\it 2020 Mathematics Subject Classification:} Primary: 53E10; Secondary: 35D40, 35K93, 35R11.
	
	\noindent
	{\it Keywords and phrases: }
	Nonlocal curvature flow, anisotropic mean curvature flow, geometric equations, De Giorgi's barriers for geometric evolutions, level-set method, viscosity solutions.
\end{abstract}

\section{Introduction} 
In this paper we prove convergence
of a class of rescaled nonlocal curvature flows
to local anisotropic  mean curvature evolutions. 
 
We fix an interaction kernel $K\colon \Rd\setminus \{0\} \to [0,+\infty)$,
possibly singular at $0$, 
modeling interactions between points in the space,
and we define the \emph{nonlocal curvature} associated with $K$
of a measurable set $E\subseteq \Rd$ at $x\in\partial E$ as
	\begin{equation}\label{eq:dfncurv}
	H_K(E,x) :=-\lim_{r\to 0^+}\int_{B(x,r)^c}K(y-x)\tilde\chi_E(y)\de y.
	\end{equation}
Here and in the sequel, $B(x,r)$ is the open ball with center $x$ and radius $r$,
$E^c=\Rd\setminus E$ for any $E\subseteq \Rd$,
and $\tilde\chi_E(x)$ is equal to $1$ when $x\in E$
and it is equal to $-1$ otherwise.

Note that if $K\in L^1(\Rd)$,
then the nonlocal curvature coincides with $
	H_K(E,x) = -(K\ast \tilde\chi_E)(x)$. 
More generally,  we will  impose  conditions on $K$
so that $C^{1,1}$ sets have bounded nonlocal curvature, see  Section 2. 

By using the nonlocal curvature operator,
we define a nonlocal flow as follows:
for a family of evolving sets $\{E(t)\}_{t\geq 0}$,
we prescribe the geometric law
\begin{equation}\label{geometricflow}
\partial_t x(t)\cdot \hat{n} = -H_K(E(t), x),
\end{equation} 
where $\hat{n}$ is the outer unit normal 
to $\partial E(t)$ at the point $x(t)$. 

Geometric nonlocal evolutions as \eqref{geometricflow} emerged  as models for dislocations dynamics in the description of  plastic behavior of metallic crystals. 
Dislocations are  linear misalignments in the microscopic crystalline lattice, and
whose normal velocity is determined
by the so called Peach-Koehler force.
In \cite{ahl:dislocationdynamics},
Alvarez, Hoch, Le Bouar, and Monneau proposed a mathematical description
of dislocation dynamics in terms of a nonlocal eikonal equation,
where  the Peach-Koelher force is encoded by a convolution kernel $c_0$. 
The explicit expression of the kernel might be complicated,
because it has to capture the physical features of the system, e.g. in general it can change sign. 
By then, their model has been  simplified in a series of papers,
in which well-posedness of the geometric evolution law was obtained, 
see  \cite{acm:existenceand,bl:nonlocalfirst,imr:homogenizationof,fim:homogenizationof,dfm:convergenceof}.
 
Another interesting aspect of the nonlocal curvature \eqref{eq:dfncurv} is that
it is the first variation of the nonlocal perimeter functional
	\[\PerK(E):= \int_{E}\int_{E^c}K(y-x)\de y\de x\]
(see e.g. \cite{cmp:nonlocalcurvature}),
and the geometric evolution law \eqref{geometricflow}
is then understood 
as the $L^2$  gradient flow of this kind of perimeter. 
 
When $K$ belongs to an appropriate class of fractional kernels,
existence and uniqueness of solutions in the viscosity sense
to the geometric flow \eqref{geometricflow} were investigated in \cite{imb:levelset}.
More recently, 
Chambolle, Morini, and Ponsiglione have proved  in \cite{cmp:nonlocalcurvature}
well posedness of 
the level-set formulation of a wide class
of local and nonlocal translation-invariant geometric flows.
They also have exploited the minimizing movement scheme
to construct solutions to flows
driven by variational curvatures.

The analysis of nonlocal curvature flows as \eqref{geometricflow}
has lately been carried out from various perspectives,
especially in fractional case;
for instance, conservation of convexity, formation of neckpinch singularities,
and fattening phenomena
have been considered,
see \cite{cdn:fatteningand, csv,cnr:convex}.

As we anticipated, we are interested in the asymptotic behaviour
of a family of  nonlocal curvature flows, 
obtained by rescaling the kernel $K$.
Explicitly, for any $\ep>0$ and $x\in\Rd$,
we put
	\begin{equation}\label{eq:rescK}
	K_\ep(x):=\frac{1}{\ep^d}K\left(\frac{x}{\ep}\right)
	\end{equation}
and, for a measurable set $E\subset\Rd$ and $x\in\partial E$,
we define
	\begin{equation}\label{eq:resccurv}
	H_\ep(E,x):= \frac{1}{\ep} H_{K_\ep}(E,x).
	\end{equation}
We remark that
this scaling is mass preserving,
in the sense that, at least formally, $\norm{K}_{L^1(\Rd)}=\norm{K_\ep}_{L^1(\Rd)}$.
At the same time,
we  expect a localization effect in the limit. 
 
 Our main assumptions on the kernel $K$ are listed in Section \ref{sec:K}.
 In particular, we will  require that $K$ 
 is sufficiently regular and has  at most  a singularity in the origin, that is $ K\in W^{1,1}(\Rd\setminus B(0,r))$ for all $r>0$. 
 In addition, we assume that there exist $m>0$ and $s\in (0,1)$ such that
 	\[
 	0\leq K(x)\leq \frac{m}{\vass{x}^{d+1+s}} \quad \text{if } x\in B(0,1)^c,
 	\]
and that for all $\lambda>0$ and all $e\in \mathbb{S}^{d-1}:=\partial B(0,1)$ there holds
	\[
		K, \left|x\right|\left|\nabla K(x)\right|
		\in L^1\left(
			\left\{ x\in \Rd : \left| x \cdot e\right|
				\leq \frac{\lambda}{2} \vass{\pi_{e^\perp}(x)}^2 \right\}
		\right),
	\]
where   $e^\perp$ is  the hyperplane of vectors that are orthogonal to $e$,
and $\pi_{e^\perp}$ is the orthogonal projection operator on $e^\perp$.  
Actually,
in order to exploit these properties in our proofs,
we will need to make them  quantitative.
We refer the reader to Section \ref{sec:K}
for a detailed presentation of the assumptions. 
 
We point out that in \cite{dfm:convergenceof}
a similar problem was studied,
but there the assumptions on the interaction kernel,
and thus the choice of the rescaling,
are different from ours.
Indeed, the authors of \cite{dfm:convergenceof}
assume the kernel $K$ to be bounded near the origin
(hence nonsingular)
and to decay as $\left|x\right|^{-(d+1)}$ at infinity. 
The rescaled curvature is defined as
	\[
		\frac{1}{\ep\log \ep} H_{K_\ep}(E,x),
	\] 
and the authors prove that, as $\ep\to 0$,
it converges to an  anisotropic, local curvature functional.
They also show that
the rescaled  geometric motion approaches
the flow driven by the limiting curvature. 

In the last years,
other results related
to the asymptotic behavior of rescaled nonlocal functionals
have appeared in the literature,
mainly in the stationary setting.  
For radial, nonsingular kernels,
it is proved in \cite{mrt:nonlocalperimeter} that
the rescaled perimeters $\ep^{-1} \Per_{K_\ep}(E)$ converge pointwise
to the local perimeter functional.
In the same paper,
pointwise convergence of the rescaled curvature to the local mean curvature is obtained as well.
An improvement
concerning the convergence of perimeters
has recently been obtained in \cite{bp:onthe,pag:halfspaces},
where $\Gamma$-convergence of the functionals
$\ep^{-1} \Per_{K_\ep}(E)$ to De Giorgi's perimeter
is established for a class of singular kernels. Results in the same spirit addressing specifically the fractional case can be found in \cite{a,b,v}, see also \cite{sv} for $\Gamma$-convergence of nonlocal phase transitions. 
Finally, we recall the recent preprint \cite{cdnp:convergence}, where stability results for nonlocal geometric evolutions are studied by using 
viscosity solutions arguments. In the present paper, we propose a different, more geometric, approach to the problem, as we will detail in the following.

Our first main result is the uniform convergence
of the rescaled curvature functionals
to a local, anisotropic mean curvature functional,
when they are computed for smooth, compact sets.
We fix some notations needed to formulate the precise statement.

As before,  $p^\perp$ is the hyperplane
of the vectors that are orthogonal to $p$, and
$\pi_{p^\perp}$ is the orthogonal projection operator on $p^\perp$.
We denote by $\mathrm{Sym}(d)$
the space of $d\times d$ real symmetric matrices
and by $\mathcal{H}^{d-1}$
the $(d-1)$-dimensional Hausdorff measure.
For a $C^2$ hypersurface in $\Rd$ $\Sigma$,
we define the anisotropic mean curvature functional 
\begin{equation}\label{eq:H0intro}
	H_0(\Sigma,x) := -\frac{1}{\vass{\nabla\phi(x)}}
	\mathrm{tr}\left(
	M_K\left(\hat n\right)\pi_{\hat n^\perp} \nabla^2 \phi(x) \pi_{\hat n^\perp}
	\right),
	\end{equation}
where $\phi\in C^2(\Rd)$ is a function such that
$\Sigma \cap U = \{y\in\Rd :\phi(y) = 0\}\cap U$
in some open neighbourhood $U$ of $x$,
$\nabla\phi(x)\neq 0$,
$\hat{n}$ is the outer unit normal to $\Sigma$ at $x$, and finally 
	\begin{equation}\label{eq:MK}
	\begin{matrix}
	M_K\colon & \mathbb{S}^{d-1} & \longrightarrow & \mathrm{Sym}(d) \\
	& e & \longmapsto & \displaystyle{\int_{e^\perp}K(z)z\otimes z\de \mathcal{H}^{d-1}(z)}.
	\end{matrix}	
	\end{equation}
Then, we show the following: 

	\begin{theorem}\label{stm:main1}
		Let $K$ satisfy all  the assumptions in Section \ref{sec:K}. 
		Let $E\subset\Rd$ be a set
		whose boundary $\Sigma$ is compact and of class $C^2$. Then,
		\[
			\lim_{\ep\to 0^+}H_\ep(E,x)
				= H_0(\Sigma,x)
			\quad\text{uniformly in $x\in\Sigma$}.\]
	\end{theorem}
We recall that analog results to ours for nonsingular kernels are found in \cite{mrt:nonlocalperimeter} and in \cite{dfm:convergenceof},
respectively for the isotropic and the anisotropic case.

Our  second main result deals with
the convergence of the rescaled nonlocal geometric flows 
	\begin{equation}\label{int1}
	\partial_t x(t)\cdot \hat{n} = -H_\ep(E(t), x(t))
	\end{equation} 
to the  anisotropic mean curvature flow
	\begin{equation}\label{int2}
	\partial_t x(t)\cdot \hat{n} = -H_0(\Sigma(t), x(t)),
	\end{equation} 
where $\Sigma(t):=\partial E(t)$. 
We develop our analysis
in the framework of the level-set method.
This amounts to 
defining the evolving set $E(t)$ and its boundary $\Sigma(t)$
as the $0$ superlevel set and $0$ level set of some function $\phi(t,\,\cdot\,)$, which turns out to be a viscosity solution of 
the nonlocal  parabolic partial differential equation 
	\begin{equation}\label{eq:nlc-eq}
	\partial_t  \phi(t,x) + 
 \vass{\nabla \phi(t,x)} H_\ep (\{y:\phi(t,y)\geq \phi (t,x)\},x) = 0
	\end{equation}
if $E(t)$ solves the rescaled nonlocal geometric flow \eqref{int1},
or of the local   parabolic partial differential equation 
	\begin{equation}\label{eq:loc-eq}
	\partial_t  \phi(t,x) + 
	\vass{\nabla \phi(t,x)}H_0(\{y:\phi(t,y)= \phi (t,x)\},x) = 0
	\end{equation}
if $\Sigma(t)$ solve the anisotropic mean curvature flow \eqref{int2}. 
We can state our second major result. 

	\begin{theorem}\label{stm:main2}
		Let $K$ satisfy all  the assumptions in Section \ref{sec:K}. 
		Let $u_0\colon \Rd \to \R$ be a Lipschitz continuous function
		that is  constant outside a compact set.
		Let  $u_\ep, u\colon  [0,+\infty)\times\Rd \to \R$ be respectively
		the unique continuous viscosity solution to
		\eqref{eq:nlc-eq} and \eqref{eq:loc-eq},
		 with  initial datum $u_0$.
		Then
			\[\lim_{\ep\to 0} u_\ep(t,x)=u(t,x) \quad\text{locally uniformly in } [0,+\infty)\times\Rd.\] 
	\end{theorem}

The proof of Theorem \ref{stm:main2} is based on the convergence of curvatures obtained in Theorem \ref{stm:main1}. 
We propose a proof based on the concept of geometric barrier,
introduced by De Giorgi in \cite{deg:barriers}
as a weak solution to a wide range of evolution problems.
The study of barriers in relation to geometric parabolic PDEs,
such as \eqref{eq:loc-eq},
was developed by Bellettini, Novaga, and Paolini in the late 90's
\cite{bp:someresults,bel:alcunirisultati,bn:someaspects,bn:comparisonresults}.
It turns out that,
for the class of problems under consideration,
viscosity theory and barriers can be compared,
and this is the key point that
we will exploit in our analysis.
 
 We remark that isotropic fractional kernels  such as $K(y-x)=|y-x|^{-d-s}$ for $s\in (0,1)$ are not directly included
in the class of kernels we are considering, see Example \ref{fractional}.  Nevertheless the same kind of result as Theorem \ref{stm:main1} for the fractional mean curvature as $s\to 1$ was obtained in \cite{av,v, cdnp:convergence}, whereas  the convergence of the level set flow has been proved  in \cite{cdnp:convergence} by using viscosity solution methods.

Finally, we recall that there is a large literature concerning approximation results for mean curvature motions, either with local or nonlocal operators.
One of the most renowned algoritheorems is the threshold dynamics type one introduced in  \cite{BMO:diffusiongenerated}
by  Bence, Merriman, and Osher.
This approach was rigorously settled
in \cite{bg:asimple} and  \cite{eva:convergenceof};
then, the analysis was extended
to more general diffusion operators
in \cite{ish:ageneralizationof},
\cite{ips:thresholddynamics}, and \cite{cn:convergenceof}
(for anisotropic and crystalline evolutions).  
In \cite{cs:convergenceof} Caffarelli and Souganidis established
the convergence of an analogous  threshold dynamics scheme
to the (isotropic) motion by fractional
mean curvature,
and this result was adapted to  the anisotropic case, also in presence of a driving force,
in \cite{cnr:convex}. 

\subsection*{Structure of the paper}
In Section 2
we describe the class of interaction kernels
that we consider in this work.
In Section 3 and 4
we discuss some basic properties
of the curvatures functionals,
and we recall the level-set formulation for geometric flows,
the notion of geometric barriers,
and the main results about them. 
Section 5 is devoted to the proof of Theorem \ref{stm:main1}.
In Section 6, we provide a compactness result 
for the family of solutions to the rescaled nonlocal problems.
Eventually, Section 7 contains the proof of Theorem \ref{stm:main2}.

\subsection*{Acknowledgement} The authors warmly thank Matteo Novaga for inspiring discussions on this problem.

\section{Standing assumptions on the kernel}\label{sec:K}
Throughout this work,
$K\colon \Rd\setminus\{0\} \to [0,+\infty)$ is a measurable function
such that
\begin{equation}\label{eq:K-even}
K(y) = K(-y) \quad\text{for all } y\in\Rd\setminus\{0\}
\end{equation}
and
\begin{equation}\label{eq:W11}
K\in W^{1,1}(B(0,r)^c) \quad\text{for all } r>0.
\end{equation}
Note that \eqref{eq:W11} allows both $K$ and $\nabla K$
to be singular around the origin,
and it implies convergence of their integrals at infinity;
however, 
we need to make these information quantitative.

Firstly, we require that
	\begin{equation}\label{eq:sing-orig}
	\lim_{r\to 0^+} r\int_{B(0,r)^c} K(y)\de y = 0.
	\end{equation}
Then,  
for any  $e \in \mathbb{S}^{d-1}$ and $\lambda>0$,
we set
	\[Q_\lambda(e):=
		\left\{y\in \Rd : \vass{y\cdot e}\leq \frac{\lambda}{2} \vass{\pi_{e^\perp}(y)}^2\right\},\]
and we assume  that 		
	\begin{equation}\label{eq:imbert2}
	y\mapsto K(y),
	y\mapsto \vass{y} \vass{\nabla K(y)} 
	\in L^1(Q_\lambda(e))\quad \text{for all } e \in \mathbb{S}^{d-1} \text{ and } \lambda>0.
	\end{equation}
This will  imply that
sets with $C^{1,1}$ compact boundary have finite curvature,
see Proposition \ref{stm:imbert}.
We stress that we make no isotropy hypothesis on $K$;
still, we have to suppose some control
on the mass of $K$ in $Q_\lambda(e)$,
uniformly in $e$.
We therefore suppose that
for all $\lambda>0$ there exists $a_\lambda>0$
such that for all $e \in \mathbb{S}^{d-1}$
	\begin{equation}\label{eq:massa-parabole}
	\int_{Q_\lambda(e)} K(y)\de y \leq a_\lambda.
	\end{equation}
In addition, we require that 
there exist $a_0,b_0>0$ such that for all $e \in \mathbb{S}^{d-1}$
	\begin{eqnarray}
	\limsup_{\lambda\to 0^+}
		\frac{1}{\lambda}\int_{Q_\lambda(e)} K(y)\de y \leq a_0, \label{eq:imbert3} \\
	\limsup_{\lambda\to 0^+}
		\frac{1}{\lambda}\int_{Q_\lambda(e)} \vass{\nabla K(y)}\vass{y}\de y \leq b_0 \label{eq:imbert4}.
\end{eqnarray}
We assume as well that for all $e \in \mathbb{S}^{d-1}$
	\begin{equation}\label{eq:imbert5}
		\lim_{\lambda\to+\infty} \frac{1}{\lambda}\int_{Q_\lambda(e)} K(y)\de y = 0.
	\end{equation}	

Finally, we suppose that,
far from the origin,
$K$ is bounded above by a fractional  kernel;
that is, there exist $m>0$  and $s\in(0,1)$ such that
	\begin{equation}\label{eq:frac-decay}
		K(y)\leq \frac{m}{\vass{y}^{d+1+s}} \quad \text{if } y\in B(0,1)^c.
	\end{equation}

\begin{remark}
Inequality \eqref{eq:frac-decay} entails that
for all $\alpha < s$
	\begin{equation}\label{eq:imbert1}
	\lim_{r\to +\infty} r^{1+\alpha}\int_{B(0,r)^c} K(y)\de y = 0.
	\end{equation}
Actually, most of the results in the paper are not affected
if the weaker assumption \eqref{eq:imbert1} replaces  \eqref{eq:frac-decay}.
However, for the sake of simplicity,
we decided not to pursue this direction. 
\end{remark}


As a concluding comment about our assumptions on $K$,
we describe a class of singular kernels
that fits in our analysis.

\begin{example}[Fractional  kernels]\label{fractional} 
Let us suppose that $K\colon \Rd\setminus\{0\} \to [0,+\infty)$ satisfies
\eqref{eq:K-even} and that 
there exist constants $m,\mu>0$ and $s,\sigma\in(0,1)$ such that 
	\[K(y), \vass{y} \vass{\nabla K(y)} \leq \frac{\mu}{\vass{y}^{d+\sigma}}
		\quad \text{for all } y\in B(0,1)
	\]
and
	\[K(y), \vass{y} \vass{\nabla K(y)} \leq \frac{m}{\vass{y}^{d+1+s}}
		\quad \text{for all } y\in B(0,1)^c.\]
 Then, all the assumptions above are satisfied. 
 
Also fractional kernels with exponential decay at infinity fit in our framework;
namely, these are the kernels
$K\colon \Rd\setminus\{0\} \to [0,+\infty)$
that satisfy \eqref{eq:K-even} and
for which there exist constants
$m, \mu>0$ and $s\in(0,1)$ such that 
	\[
		K(y),  \vass{y} \vass{\nabla K(y)}
			\leq \frac{\mu e^{-m |y|}}{\vass{y}^{d+s}},\quad  \forall y\in\Rd.\]
\end{example} 

\section{Preliminaries about curvature functionals}
In this section we discuss some basic results
about the local and nonlocal curvature functionals $H_0$ and  $H_K$
defined in \eqref{eq:H0intro} and \eqref{eq:dfncurv}.

First of all,
we show that the nonlocal curvature is finite
on sets with $C^{1,1}$ boundaries.
Similar results are already available
in \cite{imb:levelset} and \cite{cmp:nonlocalcurvature}.
Nonetheless, 
we detail the argument for the sake of completeness,
and to recover estimate \eqref{eq:est-reg-hyp},
which will come in handy later.
We will use the following notation: 
for $e\in\mathbb{S}^{d-1}$, $x\in\Rd$ and $\delta>0$,
we denote the cylinder of center $x$ and axis $e$ as 
	\begin{equation}\label{cilindro} C_e(x,\delta):=
		\{y\in\Rd : y = x+z+te,\text{ with } z\in e^\perp\cap B(0,\delta),t\in(-\delta,\delta)\}.\end{equation}

\begin{proposition}\label{stm:imbert}
Let $E\subset \Rd$ be an open set
such that $\partial E$ is a $C^{1,1}$-hypersurface.
Then, for all $x\in\partial E$
there exist $\bar{\delta},\lambda>0$ such that 
	\begin{equation}\label{eq:est-reg-hyp}
	\vass{H_K(E,x)}\leq \int_{Q_{\lambda,\bar{\delta}}(\hat n)} K(y)\de y
	+ \int_{B(0,\bar\delta)^c}K(y)\de y,	
	\end{equation}
where $Q_{\lambda,\bar\delta}(\hat n):=\
\{y\in Q_\lambda(\hat n) : \vass{\pi_{\hat{n}^\perp}(y)}<\bar{\delta}\}$.
In particular, $H_K(E,x)$ is finite.
\end{proposition}
\begin{proof}
Let $\Sigma:= \partial E$ and 
$\hat n$ be  the outer unit normal to $\Sigma$ at $x$.
By the regularity of $\Sigma$, there exist
$\bar{\delta}:= \bar{\delta}(x)$ and
a function $f\colon \hat{n}^\perp\cap B(0,\bar{\delta})\to (-\bar{\delta},\bar{\delta})$
of class $C^{1,1}$ such that  
	\begin{gather}
	\Sigma\cap C_{\hat n}(x,\bar\delta)
		= \{y=x+z-f(z)\hat n: z\in \hat n^\perp \cap B(0,\bar\delta)\},
		\label{eq:graph} \\
	E\cap C_{\hat n}(x,\bar\delta) 
		= \{y=x+z-t\hat n: z\in \hat n^\perp \cap B(0,\bar\delta), t\in(f(z),\bar{\delta})\}, 
		\label{eq:epigraph} \\
  \vass{f(z)}\leq \dfrac{\lambda}{2} \vass{z}^2\quad\text{for some } \lambda>0.	 \label{ftre}
	\end{gather}
It is not restrictive to assume $r<\bar\delta$;
hence, we can split the integral in \eqref{eq:dfncurv}
into the sum
\[\int_C K(y-x)\tilde\chi_E(y)\chi_{B(x,r)^c}(y) \de y
+ \int_{C^c}K(y-x)\tilde\chi_E(y)\de y,\]
where we set $C:= C_{\hat n}(x,\bar\delta)$. 
The second term above  is finite as a consequence of \eqref{eq:W11};
indeed, since $B(x,\bar\delta)\subset C$, we have that
\begin{equation}\label{eq:stima-lontano}
\vass{\int_{C^c}K(y-x)\tilde\chi_E(y)\de y} \leq \int_{B(0,\bar\delta)^c}K(y)\de y.
\end{equation}

So, we are  left to show that the integral
\[I_r := \int_C K(y-x)\tilde\chi_E(y)\chi_{B(x,r)^c}(y) \de y\]
is bounded by a constant that does not depend on $r$.	
Taking into account \eqref{eq:epigraph} and
recalling that $K$ belongs to $L^1(B(0,r)^c)$ for any $r>0$,
we can write
\[	I_r	 = \int_{\hat n^\perp\cap B(0,\bar\delta)}\left[
\int_{f(z)}^{\bar\delta} K(z-t\hat n) b_r(z,t) \de t
-\int_{-\bar\delta}^{f(z)} K(z-t\hat n) b_r(z,t) \de t
\right]\de\mathcal{H}^{d-1}(z),
\]
where, for $(z,t)\in [\hat n^\perp\cap B(0,\bar\delta)]\times (-\bar{\delta},\bar{\delta})$, 
\begin{equation}\label{eq:br}
b_r(z,t):= \begin{cases}
0 & \text{if } \vass{z}< r\text{ and } \vass{t} < \sqrt{r^2-\vass{z}^2}\\
1 & \text{otherwise}
\end{cases}.
\end{equation}
Since $K$ is even,
we get 
\[\begin{split}
I_r = & \int_{\hat n^\perp\cap B(0,\bar\delta)}\left[
\int_{f(z)}^{\bar\delta} K(z-t\hat n) b_r(z,t) \de t
- \int_{-f(-z)}^{\bar\delta} K(z-t\hat n) b_r(z,t) \de t
\right]\de\mathcal{H}^{d-1}(z) \\
= & -\int_{\hat n^\perp\cap B(0,\bar\delta)}
\int_{-f(-z)}^{f(z)} K(z-t\hat n) b_r(z,t) \de t \de\mathcal{H}^{d-1}(z)
\end{split}\] 
In view of \eqref{ftre} we infer
\[\begin{split}
\vass{I_r}	\leq &
\int_{\hat n^\perp\cap B(0,\bar\delta)}
\int_{-\frac{\lambda}{2}\vass{z}^2}^{\frac{\lambda}{2}\vass{z}^2} K(z-t\hat n) b_r(z,t) \de t \de\mathcal{H}^{d-1}(z) \\
= &
\int_{Q_{\lambda,\bar{\delta}}(\hat n)} K(y)\chi_{B(0,r)^c}(y)\de y.
\end{split}\]
Assumption \eqref{eq:imbert2} allows to take the limit in the last inequality,
and we  conclude that \eqref{eq:est-reg-hyp} holds.
\end{proof}

\begin{remark}
	We point out that
	\eqref{eq:est-reg-hyp} has been obtained
	just exploiting the facts that
	$K$ is even,  $K\in L^1(B(0,r)^c)$ for all $r>0$,
	and that $K\in L^1(Q_\lambda(e))$
	for all $e\in \mathbb{S}^{d-1}$  and $\lambda>0$.
	
	We next observe that
	in \eqref{eq:est-reg-hyp}  the second integral takes into account
	the ``tails'' of the kernel $K$,
	while the first one is related to the second fundamental form of $\Sigma$.
	We will prove in the sequel that,
	under our standing assumptions,
	the second  term is negligible  in the large scale limit. 
\end{remark}

The next lemma collects two fundamental properties of $H_K$.
We omit the proofs, which can derived easily from the definition of $H_K$.

\begin{lemma}\label{stm:CMP}
	Let $E\subset\Rd$ be an open set
	such that $H_K(E,x)$ is finite for some $x\in\partial E$.
	\begin{enumerate} 
		\item For any $h\in \Rd$ and any orthogonal matrix $R$,
			if $T(y):= Ry + h$, then
			\begin{equation}\label{eq:nl-cng-var}
			H_K(E,x)=H_{\tilde K}(T(E),T(x)),
			\end{equation}
			where $\tilde K := K\circ R^{\mathsf{t}}$.
			In particular, $H_K$ is invariant under translation.
		\item If $F\subset E$ and
			$x\in\partial E \cap\partial F$,
			then $H_K(E,x)\leq H_K(F,x)$.
	\end{enumerate}
\end{lemma}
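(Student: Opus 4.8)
The plan is to read off both assertions directly from the definition \eqref{eq:dfncurv}, working first with the truncated integrals
\[
	I_r(E,x) := \int_{B(x,r)^c} K(y-x)\tilde\chi_E(y)\de y
\]
and only afterwards letting $r\to 0^+$.

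For item (1) I would perform the change of variables $w = T(y) = Ry+h$ in the integral defining $H_{\tilde K}(T(E),T(x))$. Since $\vass{\det R}=1$ the Jacobian factor is trivial; moreover $w - T(x) = R(y-x)$, so that $\tilde K(w-T(x)) = K(R^{\mathsf t}R(y-x)) = K(y-x)$ because $R^{\mathsf t}R=\id$, while $\vass{w-T(x)} = \vass{y-x}$ shows that $w\in B(T(x),r)^c$ if and only if $y\in B(x,r)^c$, and $\tilde\chi_{T(E)}(T(y)) = \tilde\chi_E(y)$ since $T$ is a bijection mapping $E$ onto $T(E)$. Hence the truncated integral for the kernel $\tilde K$ at $T(x)$ over $B(T(x),r)^c$ equals $I_r(E,x)$ for every $r>0$, and passing to the limit yields \eqref{eq:nl-cng-var}. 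Translation invariance is the special case $R=\id$, $\tilde K = K$.

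For item (2) the key observation is the pointwise inequality $\tilde\chi_F \le \tilde\chi_E$, which holds precisely because $F\subset E$: indeed $\tilde\chi_E - \tilde\chi_F = 2\chi_{E\setminus F}\ge 0$. Since $K\ge 0$, this gives, for every $r>0$,
\[
	I_r(E,x) - I_r(F,x) = 2\int_{B(x,r)^c\cap(E\setminus F)} K(y-x)\de y \ge 0,
\]
and the right-hand side is nondecreasing as $r$ decreases, hence it converges to some $\ell\in[0,+\infty]$ by monotone convergence. As $H_K(E,x)$ is assumed finite we have $I_r(E,x)\to -H_K(E,x)$, so $I_r(F,x)$ converges (in $[-\infty,+\infty)$) to $-H_K(E,x)-\ell$; equivalently the limit defining $H_K(F,x)$ exists in $(-\infty,+\infty]$ and equals $H_K(E,x)+\ell \ge H_K(E,x)$, which is the claim.

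I do not anticipate any real obstacle, both facts being elementary consequences of the change-of-variables formula and of the sign condition $K\ge 0$. The only point deserving a line of care is that the limit in \eqref{eq:dfncurv} need not exist for an arbitrary measurable set, so in item (2) one must not manipulate $H_K(F,x)$ before its existence is justified; the monotonicity in $r$ of the difference $I_r(E,x)-I_r(F,x)$ is exactly what takes care of this, and it is the reason the hypothesis ``$H_K(E,x)$ finite'' is imposed.
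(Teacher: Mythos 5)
Your proof is correct, and it is precisely the elementary derivation from the definition \eqref{eq:dfncurv} that the paper alludes to when it omits the argument. The one point that genuinely needs care --- that the limit defining $H_K(F,x)$ exists at all, which you settle via the monotonicity in $r$ of $I_r(E,x)-I_r(F,x)=2\int_{B(x,r)^c\cap(E\setminus F)}K(y-x)\,\de y$ --- is handled properly.
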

%

We focus now on the functional  $H_0$ defined in \eqref{eq:H0intro},
which is a local anisotropic mean curvature functional,
the anisotropy being encoded by $M_K$.
As a first step, we establish the well-posedness of $M_K$ and
to this aim we recall the characterization of Sobolev functions
in terms of absolute continuity on lines,
whose definition we include here: 

\begin{definition} Let $\Omega\subset\Rd$ be an open set. 
	A function $u\colon \Omega\to\R$ is absolutely continuous on lines 
	if $u$ is Borel measurable in $\Omega$ and
	locally absolutely continuous on almost all lines parallel to coordinate axes,
	that is, if $\{e_1,\dots,e_d\}$ is the canonical basis,
	for all $i=1,\dots,d$ there exists $N_i\subset e_i^\perp$
	such that $\mathcal{H}^{d-1}(N_i)=0$
	and for all $z\in e_i^\perp \cap N_i^c$
	the function $I\ni t\mapsto u(z+te_i)$ is absolutely continuous
	on any compact interval $I$
	such that $z+te_i\in \Omega$ when $t\in I$. 
	\end{definition} 
 
Since absolutely continuous functions are differentiable a.e.,
we highlight that
if $u$ is absolutely continuous on lines,
then it admits partial  derivatives  a.e. and hence the  vector  $\nabla u$ is a.e. defined.
On the other hand, if a function has Sobolev regularity,
then it has a representative which is absolutely continuous on lines.
That is the content of the following result,
whose proof can be found in \cite[Theorem 2.3]{hai}.

\begin{theorem}\label{stm:acl}
	Let $\Omega\subset\Rd$ be an open set.
	For any $p\in[1,+\infty)$,
	$u\colon \Omega\to \R$ belongs to the Sobolev space $W^{1,p}(\Omega)$
	if and only it coincides a.e. with a function $\tilde u\in L^p(\Omega)$
	that is absolutely continuous on lines
	and whose gradient $\nabla \tilde u$ belongs to $L^p(\Omega;\Rd)$.
\end{theorem}

Thanks to \eqref{eq:W11} and to the theorem above,
we may without loss of generality suppose that
the kernel $K$ is absolutely continuous on lines in $B(0,r)^c$ for all $r>0$. 
We exploit this fact to prove boundedness and continuity of $M_K$. 

\begin{lemma}\label{stm:MK}
	Let $a_0$ be the constant in \eqref{eq:imbert3}.
	Then, for all $e\in\mathbb{S}^{d-1}$,
		\begin{equation}\label{eq:Kz2}
		\int_{e^\perp}K(z)\vass{z}^2\de\mathcal{H}^{d-1}(z)\leq a_0,
		\end{equation}
	and $M_K$ is continuous on $\mathbb{S}^{d-1}$.
	
	Moreover, for any $e\in\mathbb{S}^{d-1}$,
	there holds
	\begin{equation}\label{eq:decad-Kz2}
	\lim_{r\to+\infty} r^{\beta}\int_{e^\perp \cap B(0,r)^c} K(z)\vass{z}^2\de\mathcal{H}^{d-1}(z)=0
	\quad \text{for all } \beta<s.
	\end{equation}
\end{lemma}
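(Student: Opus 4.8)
\textbf{Proof proposal for Lemma \ref{stm:MK}.}

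The plan is to deduce all three statements from the assumptions \eqref{eq:imbert3}, \eqref{eq:imbert5}, and the absolute continuity on lines provided by \eqref{eq:W11} and Theorem \ref{stm:acl}. The unifying idea is a ``slicing by parabolas'' computation: the set $Q_\lambda(e)=\{y : |y\cdot e|\le \tfrac{\lambda}{2}|\pi_{e^\perp}(y)|^2\}$ is, up to null sets, the union over $z\in e^\perp$ of the segments $\{z+te : |t|\le \tfrac{\lambda}{2}|z|^2\}$, so that by Fubini
\[
\int_{Q_\lambda(e)}K(y)\,\de y
= \int_{e^\perp}\left(\int_{-\frac{\lambda}{2}|z|^2}^{\frac{\lambda}{2}|z|^2}K(z+te)\,\de t\right)\de\mathcal{H}^{d-1}(z).
\]
Dividing by $\lambda$ and letting $\lambda\to 0^+$, the inner average converges to $|z|^2 K(z)$ for $\mathcal{H}^{d-1}$-a.e.\ $z\in e^\perp$ by continuity of $K$ along the line $t\mapsto K(z+te)$ (here we use Theorem \ref{stm:acl}: for a.e.\ $z$ this line map is absolutely continuous, hence continuous, away from the origin). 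By Fatou's lemma applied to the family indexed by $\lambda$, the liminf of $\tfrac1\lambda\int_{Q_\lambda(e)}K$ dominates $\int_{e^\perp}K(z)|z|^2\,\de\mathcal{H}^{d-1}(z)$, and assumption \eqref{eq:imbert3} bounds the limsup by $a_0$; this yields \eqref{eq:Kz2}.

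For continuity of $M_K$ on $\mathbb{S}^{d-1}$, I would fix $e_k\to e$ and rotate: writing $R_k$ for a rotation with $R_k e_k = e$, a change of variables gives $M_K(e_k) = R_k^{\mathsf t}\,M_{K\circ R_k}(e)\,R_k$, but this is awkward because the kernel changes with $k$. A cleaner route is to parametrize $e^\perp$ by a fixed copy of $\R^{d-1}$ via an orthonormal frame depending continuously on $e$ (locally on $\mathbb{S}^{d-1}$) and write
\[
M_K(e) = \int_{\R^{d-1}}K\big(B_e\zeta\big)\,(B_e\zeta)\otimes(B_e\zeta)\,\de\zeta,
\]
where $B_e:\R^{d-1}\to e^\perp$ is the (continuously $e$-dependent) isometric embedding. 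For fixed $\zeta\ne 0$, the integrand is continuous in $e$ on $\{e : B_e\zeta\notin\text{singular set}\}$, which is a.e.\ $e$ since $B_e\zeta\ne 0$; here one must be slightly careful that the null set of bad $\zeta$ can be chosen uniformly, but in fact for each fixed $\zeta\ne 0$ the map $e\mapsto K(B_e\zeta)$ is defined for all $e$ (as $B_e\zeta\ne 0$) and is continuous by continuity of $K$ on $\Rd\setminus\{0\}$—this last point again using absolute continuity on lines, or more simply the $W^{1,1}_{\mathrm{loc}}(\Rd\setminus\{0\})$ regularity together with a Lebesgue-point argument is not needed if one just invokes that $K$ has a continuous representative away from the origin. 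To pass the limit inside the integral I need a dominating function: split $\Rd$ into $B(0,1)\setminus\{0\}$ and $B(0,1)^c$. On the outer region \eqref{eq:frac-decay} gives $K(y)|y|^2\le m|y|^{-(d-1+s)}$, which is integrable over $e^\perp\cap B(0,1)^c$ uniformly in $e$. On the inner region one uses \eqref{eq:Kz2}: the family $\{K(B_e\cdot)|B_e\cdot|^2\}_e$ has $L^1$-norms on $e^\perp\cap B(0,1)$ uniformly bounded by $a_0$, which is not quite domination, so instead I would establish uniform integrability near the origin. This is the step I expect to be the main obstacle: upgrading the uniform mass bound \eqref{eq:imbert3} to a uniform-in-$e$ control of $\int_{e^\perp\cap B(0,\rho)}K(z)|z|^2\,\de\mathcal{H}^{d-1}(z)$ as $\rho\to 0$. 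The parabola-slicing identity helps: $\int_{e^\perp\cap B(0,\rho)}K(z)|z|^2 = \lim_{\lambda\to0}\tfrac1\lambda\int_{Q_\lambda(e)\cap\{|\pi_{e^\perp}y|<\rho\}}K$, and the right-hand side is controlled by the same kind of estimate as \eqref{eq:imbert3} restricted to a smaller parabola region, which tends to $0$ as $\rho\to0$ by dominated convergence using \eqref{eq:massa-parabole}; making this uniform in $e$ is exactly what \eqref{eq:massa-parabole} and \eqref{eq:imbert3} are designed to give. Once uniform integrability is in hand, Vitali's convergence theorem yields continuity of $M_K$.

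For the decay statement \eqref{eq:decad-Kz2}, fix $\beta<s$ and pick $\alpha$ with $\beta<\alpha<s$. By the slicing identity applied to the truncated parabola region $Q_\lambda(e)\setminus B(0,R)$ I would relate $\int_{e^\perp\cap B(0,R)^c}K(z)|z|^2\,\de\mathcal{H}^{d-1}(z)$ to $\tfrac1\lambda\int_{Q_\lambda(e)\cap B(0,R)^c}K\,\de y$ in the limit $\lambda\to 0$; alternatively, and more directly, \eqref{eq:frac-decay} gives the pointwise bound $K(z)|z|^2\le m|z|^{-(d-1+s)}$ for $|z|>1$, whence
\[
r^{\beta}\int_{e^\perp\cap B(0,r)^c}K(z)|z|^2\,\de\mathcal{H}^{d-1}(z)
\le m\,r^{\beta}\int_{e^\perp\cap B(0,r)^c}|z|^{-(d-1+s)}\,\de\mathcal{H}^{d-1}(z)
= C\,m\,r^{\beta-s}\xrightarrow[r\to\infty]{}0,
\]
using that $\int_{e^\perp\cap B(0,r)^c}|z|^{-(d-1+s)}\,\de\mathcal{H}^{d-1}(z)=c_d\,r^{-s}$ by scaling in the $(d-1)$-dimensional hyperplane, and this bound is uniform in $e$. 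This is the routine part. Altogether the lemma follows, with the delicate point being the uniform-in-$e$ small-ball control that underpins the continuity of $M_K$.
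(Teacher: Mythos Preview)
Your arguments for \eqref{eq:Kz2} and for \eqref{eq:decad-Kz2} are essentially the paper's: slice $Q_\lambda(e)$ by Fubini, use Theorem~\ref{stm:acl} to obtain continuity of $t\mapsto K(z+te)$ for $\mathcal{H}^{d-1}$-a.e.\ $z\in e^\perp$, then apply Fatou together with \eqref{eq:imbert3}; for the decay, insert the pointwise bound \eqref{eq:frac-decay} on $e^\perp\cap B(0,1)^c$ and integrate in polar coordinates on the hyperplane.

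For the continuity of $M_K$ you take a different route, and there is a genuine gap. You assert that $K$ ``has a continuous representative away from the origin'' so that $e\mapsto K(B_e\zeta)$ is continuous for each fixed $\zeta\neq 0$. This is false in general: $W^{1,1}_{\mathrm{loc}}(\Rd\setminus\{0\})$ does not embed into $C(\Rd\setminus\{0\})$ when $d\geq 2$, and the ACL property of Theorem~\ref{stm:acl} yields absolute continuity only along almost every line in a \emph{fixed} direction, not along the spherical arcs $e\mapsto B_e\zeta$ traced out by your moving frame. So the pointwise-convergence step in your argument is unjustified as written. The paper avoids the moving frame altogether: it fixes $e$, chooses rotations $R_n\to\id$, and changes variables so that
\[
M_K(R_n e)=\int_{e^\perp}K(R_nz)\,R_nz\otimes R_nz\,\de\mathcal{H}^{d-1}(z)
\]
is an integral over the \emph{same} hyperplane as $M_K(e)$. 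The difference then splits into a term controlled by $\vass{R_n-\id}$ times the uniform bound $a_0$ (via \eqref{eq:Kz2} applied to the direction $R_n e$), and the term $\int_{e^\perp}[K(R_nz)-K(z)]\,z\otimes z\,\de\mathcal{H}^{d-1}(z)$, for which the paper invokes the $L^1$-continuity of rotations, $\norm{K\circ R_n-K}_{L^1(B(0,r)^c)}\to 0$, to obtain $K(R_nz)\to K(z)$ for $\mathcal{H}^{d-1}$-a.e.\ $z\in e^\perp$, and then closes via \eqref{eq:Kz2}. Your concern about uniform integrability near the origin is the right technical issue in either approach; the paper addresses it only tersely, through the uniform-in-direction bound \eqref{eq:Kz2}.
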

\begin{proof}
	By (a slight adaptation of) Theorem \ref{stm:acl},	
	for any $e\in\mathbb{S}^{d-1}$ and any $j\in\N$,
	there exists a $\mathcal{H}^{d-1}$-negligible
	$N_j \subset \{z\in e^\perp : j\vass{z}\geq1\}$
	such that, for all $z\in e^\perp\cap N_j^c$ with $j\vass{z}\geq1$,
	the function $t\mapsto K(z+te)$ is absolutely continuous
 	when $t$  belongs to closed, bounded intervals. 
	By the arbitrariness of $j\in\N$,
	we conclude that
	for $\mathcal{H}^{d-1}$-a.e. $z\in e^\perp$,
	$[a,b] \ni t\mapsto K(z+te)$ is absolutely continuous for any $a,b\in\R$.
	
	Hence, by the Mean Value Theorem,
	for $\mathcal{H}^{d-1}$-almost every $z\in e^\perp$
	we find
	\begin{equation}\label{eq:mean-val}
	\lim_{\lambda\to0^+} \frac{1}{\lambda}
	\int_{-\frac{\lambda}{2}\vass{z}^2}^{\frac{\lambda}{2}\vass{z}^2}
	K(z+te)\de t = K(z)\vass{z}^2.
	\end{equation}
	Now, for any $\lambda>0$,
	\eqref{eq:imbert2} guarantees that
	\[a_\lambda(e):= \int_{Q_\lambda(e)} K(y)\de y\in(0, +\infty).\]
	Moreover, we have
	\[
	\frac{1}{\lambda}\int_{e^\perp}
	\int_{-\frac{\lambda}{2}\vass{z}^2}^{\frac{\lambda}{2}\vass{z}^2}
	K(z+te)\de t\de\mathcal{H}^{d-1}(z) =
	\frac{a_\lambda(e)}{\lambda}.			
	\]
	In view of \eqref{eq:mean-val} and \eqref{eq:imbert3},
	we can take the limit $\lambda\to 0^+$
	on both sides of the last equality
	and this yields \eqref{eq:Kz2},
	as desired.
	
	Now we prove that $M_K$ is continuous.
	We fix $e\in\mathbb{S}^{d-1}$
	and we consider a sequence of rotations $R_n$
	such that $R_n\to \id$.
	We have
		\[\begin{split}
			\vass{M_K(R_n e)-M_K(e)} = &\,
				\vass{\int_{e^\perp} K(R_nz)\, R_nz\otimes R_nz\de\mathcal{H}^{d-1}
				-\int_{e^\perp} K(z)z\otimes z\de\mathcal{H}^{d-1}} \\
			\leq &\, \vass{\int_{e^\perp} K(R_nz)\, [R_nz\otimes R_nz-z\otimes z]\de\mathcal{H}^{d-1}} \\
				&\,+\vass{\int_{e^\perp} [K(R_nz)-K(z)]z\otimes z\de\mathcal{H}^{d-1}}.	
		\end{split}\]
	Since $K\in L^1(B(0,r)^c)$ for all $r>0$, it holds
	$$\lim_{n\to+\infty}\norm{K\circ R_n - K}_{L^1(B(0,r)^c)}=0;$$
	hence, we deduce that $K(R_nz)\to K(z)$ for $\mathcal{H}^{d-1}$-a.e. $z\in e^\perp$
	and	this, together with \eqref{eq:Kz2},
	gets that the upper bound we have on $\vass{M_K(R_n e)-M_K(e)}$
	vanishes as $n\to+\infty$.
	
Estimate \eqref{eq:decad-Kz2} is an easy consequence of assumption \eqref{eq:frac-decay}. \end{proof}

From the very definition of $M_K$,
we notice that
$\pi_{\hat n^\perp}M_K\left(\hat n\right)\pi_{\hat n^\perp} = M_K\left(\hat{n}\right)$. Using this, we observe that if $\Sigma,x,\phi$, and $\hat{n}$ are the same as in \eqref{eq:H0intro},
we have
	\begin{align}\label{eq:H0}
	H_0(\Sigma,x) & = -\dfrac{1}{\vass{\nabla\phi(x)}}\mathrm{tr}\left(
						M_K\left(\hat n\right)\nabla^2 \phi(x)
					\right)   \\
				& = - 
					\dfrac{1}{\vass{\nabla\phi(x)}}\int_{\hat n^\perp}
						K(z)\nabla^2 \phi(x)z\cdot 	z\de\mathcal{H}^{d-1}(z).  \nonumber
	\end{align}
	\begin{remark}		
		Let us consider a smooth hypersurface $\Sigma$
		whose outer unit normal at a given point $x$ is $\hat n$,
		and the map $T(y):= Ry + h$,
		where $R$ is an orthogonal matrix and $h\in\Rd$.
		Then, it is easy to check by using  \eqref{eq:H0}
		that it  holds
			\begin{equation}\label{eq:l-cng-var}
			H_0(\Sigma,x)=\tilde H_0(T(\Sigma),T(x)),
			\end{equation}
		where $\tilde H_0$ is the anisotropic mean curvature functional
		associated with the kernel $\tilde K:= K \circ R^\mathrm{t}$.
		To prove our claim, we observe that
		if $\Sigma = \{y\in\Rd : \phi(y)=0\}$
		for some smooth $\phi\colon \Rd\to\R$,
		then $T(\Sigma)=\{y\in\Rd : \psi(y)=0\}$
		with $\psi(y):= \phi( R^\mathsf{t}(y-x))$.
		We have
			\[\nabla\psi(T(y))=R\nabla\phi(y)
			\quad\text{and}\quad
			\nabla^2\psi(T(y))=R\nabla^2\phi(y)R^\mathsf{t},
			\]
		and, therefore,
			\[\begin{split}
				\tilde H_0(T(\Sigma),T(x)) = &
					-\frac{1}{\vass{R\nabla\phi(x)}}\int_{R(\hat n^\perp)}
					\tilde K(z)\left(R\nabla^2\phi(x)R^\mathsf{t}\right)z\cdot z\de\mathcal{H}^{d-1}(z) \\
					= & -\frac{1}{\vass{\nabla\phi(x)}}\int_{\hat n^\perp}
					K(z)\nabla^2\phi(x)z\cdot z\de\mathcal{H}^{d-1}(z).
			\end{split}\]
	\end{remark}

\begin{remark}[Connection with standard mean curvature] \label{remarkradial} 
When $K$ is  radial,
that is, $K(x)=K_0(\vass{x})$ for some $K_0\colon (0,+\infty)\to [0,+\infty)$,
then $H_0$ coincides with the standard mean curvature,
up to a multiplicative constant.	
Indeed, let $\Sigma$ be a $C^2$ hypersurface
	such that $0\in\Sigma$ and
	$\Sigma\cap U = \{y\in U : \phi(y)=0\}$
	for some  neighbourhood $U$ of $0$ and
	some smooth function $\phi\colon U\to \R$.
	We suppose also that $\nabla\phi(0)\neq 0$ and
	that the outer unit normal to $\Sigma$ at $0$ is $e_d$.
	We recall the expression
	of the  mean curvature $H$ of $\Sigma$ at $0$:
		\[ H(\Sigma,0) =
			-\frac{1}{\omega_{d-1}\vass{\nabla\phi(0)}}
			\int_{\mathbb{S}^{d-2}} \nabla^2\phi(0)e\cdot e\, \de\mathcal{H}^{d-2}(e),
		\]
	with $\omega_{d-1} := \mathcal{H}^{d-1}(\mathbb{S}^{d-1})$.

If  $K(x)=K_0(\vass{x})$, then formula \eqref{eq:Kz2} reads
		\[ c_K:= \int_0^{+\infty} r^d K_0(r)\de r<+\infty,\]
			and, consequently, we have
		\begin{align*}
			H_0(\Sigma,0) &= -
					\frac{1}{\vass{\nabla\phi(0)}}
				\int_0^{+\infty} r^d K_0(r)\de r
					\int_{\mathbb{S}^{d-2}}\nabla^2\phi(0)e\cdot e\,\de\mathcal{H}^{d-2}(e) \\
						&=  \omega_{d-1}\,c_K H(\Sigma,0).
 		\end{align*}
\end{remark}

\section{Barriers and level-set flow for geometric evolutions}
We devote this section to some basics about 
level-set formulations and barriers for the geometric flows driven by the curvatures $H_K$ and $H_0$.
In particular, we recall existence and uniqueness results for the level-set flow,
and  we revise its connections with the notion of geometric barriers. 

We consider the following  geometric evolutions 
for the family of sets $\{E(t)\}_{t\geq 0}$:
	\begin{equation}\label{gf} \partial_t x(t)\cdot \hat{n} = - H_\ep(E(t), x),\qquad \partial_t x(t)\cdot \hat{n} = -H_0(E(t), x),\end{equation}
where $\hat{n}$ is the outer unit normal 
to $\partial E(t)$ at the point $x(t)$ and $H_\ep\ep$ is the rescaled version of $H_K$ defined in \eqref{eq:resccurv}.
In addition, we accompany these equations with an initial datum $E_0$,
which we assume to be a bounded set. 	
 
Let us begin with the level-set formulations of the geometric flows \eqref{gf}.
First of all, we interpret the initial datum $E_0$ as the superlevel set of a suitable function $u_0:= \Rd\to\R$.
Explicitly, we suppose that $E_0=\{x : u_0(x)\geq 0\}$ and $\partial E_0=\{x : u_0(x)=0\}$;
moreover, throughout the paper we assume that 
	\begin{equation}\label{initialdatum} 
	u_0\colon \Rd \to\R
	\text{ is Lipschitz  and constant outside a compact $C$.} 
	\end{equation}
Then, we consider the nonlocal and local Cauchy problems: 
	\begin{eqnarray}\label{eq:eps_pb}
	\begin{cases}
	\partial_t  u(t,x) + 
	 \vass{\nabla u(t,x)} H_\ep(\{y:u(t,y)\geq u (t,x)\},x) = 0
		\\
	\hspace{7.5cm} (t,x)\in[0,+\infty)\times\Rd \\
	u(0,x) = u_0(x) \hspace{7.4cm}  x\in\Rd
	\end{cases},
\\  \label{eq:limit_pb}
	 	\begin{cases}
	 	\partial_t u(t,x)
	 		- \mathrm{tr}\left(M_K\left(\widehat{\nabla u(t,x)}\right)\nabla^2 u(t,x)\right)
	 		= 0
	 						& (t,x)\in[0,\infty)\times\Rd \\
	 	u(0,x) = u_0(x) 	& x\in\Rd
	 \end{cases}.
	 \end{eqnarray}
Observe that
	\[\vass{\nabla u(x)}H_0(\{y:u(y)= u (x)\},x)
		=	-\mathrm{tr}\left(M_K\left(\widehat{\nabla u(x)}\right)\nabla^2 u(x)\right)\] 
(recall that $\hat{p}:= p/\vass{p}$ if $p\neq 0$).
	
We  remind the  definition of viscosity solution
for nonlocal equations,
which goes back to the work \cite{sle:approximationschemes},
see also 
\cite{imb:levelset,dfm:convergenceof,cmp:nonlocalcurvature,cdn:fatteningand}.

\begin{definition}[Solution to the rescaled problems]
	A locally bounded, upper semicontinuous function  (resp. lower semicontinuous) 
	$u_\ep: [0,+\infty)\times \Rd \to\R$  is a viscosity subsolution (resp. supersolution)
	to the problem \eqref{eq:eps_pb} if
	\begin{enumerate}
		\item $u_\ep(0,x)\leq u_0(x)$ for all $x\in\Rd$ (resp. $u_\ep(0,x)\geq u_0(x)$);
		\item for all $(t,x)\in(0,+\infty)\times \Rd$
		and for all $\phi\in C^2([0,+\infty)\times \Rd)$ such that
		$u_\ep -\phi$ has a maximum at $(t,x)$ (resp. has a minimum at $(t,x)$), 
		it holds
		\[\partial_t\phi(t,x)\leq 0 \quad  (\text{resp. }\partial_t\phi(t,x)\geq 0)\qquad
		\text{when } \nabla\phi(t,x)=0, 
		\]
		or 
		\begin{gather*}
		\partial_t \phi(t,x)
		+  \vass{\phi(t,x)} H_\ep(\{y:\phi(t,y)\geq \phi (t,x)\},x) \leq 0 \\
		(\text{resp. } 
		\partial_t \phi(t,x)
		+  \vass{\phi(t,x)} H_\ep(\{y:\phi(t,y) > \phi (t,x)\},x)\geq 0)
			\quad\text{otherwise.}
		\end{gather*}
	\end{enumerate}

	A continuous function $u_\ep: [0,+\infty)\times \Rd \to\R$
	is a viscosity solution to \eqref{eq:eps_pb}
	if it is both a viscosity sub- and supersolution. 
\end{definition}

Existence and uniqueness 
of a viscosity solution to \eqref{eq:eps_pb}
were proved in \cite{cmp:nonlocalcurvature}, 
in a very general setting.
A similar result can also be found in \cite{imb:levelset}.

\begin{theorem}[Comparison principle and existence of solutions to the nonlocal problem]\label{esistenzaeps}
If the standing assumptions on the kernel and \eqref{initialdatum} hold,
for all $\ep>0$,
	if $v_\ep, w_\ep: [0,+\infty)\times\Rd \to \R$
	are respectively a sub- and a supersolution to \eqref{eq:eps_pb},
	then $v_\ep(t,x)\leq w_\ep(t,x)$ for all $(t,x)\in [0,+\infty)\times\Rd$.
	
	Moreover, \eqref{eq:eps_pb} admits
	a unique bounded, Lipschitz continuous
	viscosity solution in $[0,+\infty)\times\Rd$,
	which is constant in $\Rd\setminus C$, for some compact set $C\subset \Rd$.  
\end{theorem}  
 
We recall also the definition of solution to the limit problem \eqref{eq:limit_pb}, see \cite{dfm:convergenceof}.
	
	\begin{definition}[Solution to the limit problem]
	A locally bounded, upper semicontinuous function (resp. lower semicontinuous function) 
	$u: [0,\infty)\times \Rd\to\R$ is a viscosity subsolution (resp. supersolution) 
	to the Cauchy's problem \eqref{eq:limit_pb} if
		\begin{enumerate}
		\item $u(0,x)\leq u_0(x)$ for all $x\in\Rd$, (resp. $u(0,x)\geq u_0(x)$);
		\item for all $(t,x)\in(0,+\infty)\times \Rd$
			and for all $\phi\in C^2([0,+\infty)\times \Rd)$ such that
			$u -\phi$ has a maximum at $(t,x)$ (resp. a minimum at $(t,x)$)
			it holds
				\[\partial_t\phi(t,x)\leq 0\quad(\text{resp. }\partial_t\phi(t,x)\geq 0)
					\qquad \text{when } \nabla\phi(t,x)=0 \text{ and } \nabla^2\phi(t,x)=0
			\]
			or 
				\[\partial_t \phi(t,x)
				- \mathrm{tr}\left(M_K\left(\widehat{\nabla \phi(t,x)}\right)\nabla^2 \phi(t,x)\right) \leq 0 \   (\text{resp. }\geq 0)
					\quad\text{otherwise.}
			\]
		\end{enumerate}
	A continuous function $u: [0,+\infty)\times \Rd \to\R$
	is a viscosity solution to \eqref{eq:limit_pb}
	if it is both a viscosity sub- and supersolution.
	\end{definition}
 
As for existence of solutions,
we observe that the function
	\begin{equation*}
	\begin{matrix}
		F_0: & \Rd\setminus\{0\}\times \mathrm{Sym}(d)&  \longrightarrow & \R \\
					& (p,X) & \longmapsto &
			\displaystyle{-\mathrm{tr}\left(M_K\left(\hat{p}\right)X\right)}
	\end{matrix}	
	\end{equation*}
that defines the problem \eqref{eq:limit_pb}
has the three following properties:
	\begin{enumerate}
		\item it is continuous;
		\item it is geometric, that is,
			for all $\lambda >0$, $\sigma\in\R$, $p\in\Rd\setminus\{0\}$
			and $X\in\mathrm{Sym}(d)$ it holds
			$F_0(\lambda p, \lambda X + \sigma p\otimes p)=\lambda F_0(p,X)$.
		\item it is degenerate elliptic, that is,
			$F_0(p,X)\geq F_0(p,Y)$
			for all $p\in\Rd\setminus\{0\}$
			and $X,Y\in\mathrm{Sym}(d)$ such that $X\leq Y$.
	\end{enumerate}
It is well known \cite{bn:comparisonresults,cgg:uniquenessand}
that these conditions grant
existence and uniqueness of a viscosity solution:

	\begin{theorem}\label{esistenza0}
		Let us suppose that \eqref{initialdatum} holds.
		Then, the Cauchy's problem \eqref{eq:limit_pb} admits
		a unique bounded, Lipschitz continuous  viscosity solution in $[0,+\infty)\times \Rd$,
		which is constant in $\Rd\setminus C$, for some compact set $C\subset \Rd$.
	\end{theorem}

Summing up,
owing to Theorems \ref{esistenzaeps} and \ref{esistenza0},
we get that, for every initial datum $u_0$ as in \eqref{initialdatum},
there exist a unique viscosity solution $u_\ep$ to \eqref{eq:limit_pb} and
a unique viscosity solution $u$ to \eqref{eq:limit_pb}. 
We define the level-set flows associated with these solutions. For every $\lambda\in \R$, we set 
 \begin{eqnarray}\label{levelseteps} E_{\ep, \lambda}^+(t)=& \{x\in\Rd : u_\ep(t,x)\geq \lambda\}, \qquad & E_{\ep, \lambda}^-(t)=\{x\in\Rd : u_\ep(t,x)>\lambda\}, \\
 E^{+}_{\lambda}(t)= & \{x\in\Rd : u(t,x)\geq \lambda\}, \qquad & E^{-}_{\lambda}(t)=  \{x\in\Rd : u(t,x)> \lambda\}.
\label{levelset0}  
 \end{eqnarray}
 
It is well known that, as long as they are smooth,
these families are solutions to the geometric flows \eqref{gf} resp. with $H_\ep$ and $H_0$ and initial datum $E_\lambda=\{x\in\Rd : u_0(x)\geq \lambda\}$. 
 
Geometric evolutions may be formulated
as PDEs involving distance functions from the moving front,
see for instance the survey \cite{amb:geometricevolution} by Ambrosio;
in the following definitions,
we use them to express a regularity property
both in time and space for a class of evolving sets
(see \ref{stm:ii} below) w.r.t. a generic geometric law.

\begin{definition}\label{defistrict}
Let $0\leq t_0<t_1<+\infty$. 
We say that the evolutions of sets
$[t_0,t_1]\ni t\mapsto D(t)$ is a
\emph{geometric subsolution (resp. supersolution)}
to the flow associated with the curvature functional $H$ if
	\begin{enumerate}
		\item\label{stm:i} $D(t)$ is closed and  $\partial D(t)$ is compact
			for all $t\in[t_0,t_1]$;
		\item\label{stm:ii} there exists an open set $U\subset \Rd$
			such that the distance function $(t,x)\mapsto \de_{D(t)}(x)$
			is of class $C^\infty$ in $[t_0,t_1]\times U$
			and $\partial D(t)\subset U$ for all $t\in[t_0,t_1]$;
		\item for all $t\in(t_0,t_1)$ and $x(t)\in\partial D(t)$,
			it holds
				\begin{equation}\label{eq:smooth-sub}
				\partial_t x(t)\cdot \hat{n} \leq -H( D(t),x(t))\qquad(\text{resp. }\partial_t x(t)\cdot \hat{n} \geq -H( D(t),x(t)),
				\end{equation}
			where $\hat{n} $ is the outer unit normal to $D(t)$ at $x$.
	\end{enumerate}

When strict inequalities hold,
$D(t)$  is called \emph{strict geometric subsolution (resp. strict geometric supersolution)}.
\end{definition}
 
\begin{remark}
We notice that, for any $p\in \Rd\setminus\{0\}$ and
$X\in\mathrm{Sym}(d)$, by \eqref{eq:Kz2} we get that 
	\[\vass{\mathrm{tr}(M_K(\hat{p})X)} = 
		\frac{1}{2}\vass{\int_{\hat{p}^\perp}
			K(z)\, Xz\cdot z\de\mathcal{H}^{d-1}(z)}
		\leq \frac{a_0}{2}\vass{X},
	\]
This ensures that
geometric sub- and supersolution for the flow associated with $H_0$ exist
(see \cite{bn:comparisonresults}).
\end{remark}

Next, we remind the notion of geometric barriers
w.r.t. these smooth evolutions:

	\begin{definition}\label{defbarrier}
	Let $T>0$ and  $\mathcal{F}^-$ and $\mathcal{F}^+$
	be, respectively, the families of
strict geometric sub- and supersolution 
	to the flow  associated with some curvature functional $H$, as introduced in Definition \ref{defistrict}. 
		\begin{enumerate}
			\item We say that the evolution of sets
			$[0,T]\ni t\mapsto E(t)$ is an \emph{outer barrier}
			w.r.t. $\mathcal{F}^-$ (resp. $\mathcal{F}^+$)
			if whenever $[t_0,t_1]\subset[0,T]$
			and $[t_0,t_1]\ni t\mapsto D(t)$ is a
			smooth strict subsolution
			(resp. $F(t)$ is a smooth strict  supersolution) such that
			$D(t_0)\subset E(t_0)$,
			then  we get  $D(t_1)\subset E(t_1)$ (resp. such that $F(t_0)\subset E(t_0)$,
			then  we get  $F(t_1)\subset E(t_1)$).
			\item Analogously,
			$[0,T]\ni t\mapsto E(t)$ is an \emph{inner barrier}
			w.r.t. the family $\mathcal{F}^-$ (resp. $\mathcal{F}^+$)
			if whenever $[t_0,t_1]\subset[0,T]$
			and $[t_0,t_1]\ni t\mapsto D(t)$ is a
			smooth strict subsolution			
			(resp. supersolution) such that
			$E(t_0) \subset \mathrm{int}(D(t_0))$,
			then $E(t_1) \subset \mathrm{int}(D(t_1))$ (resp. $E(t_0) \subset \mathrm{int}(F(t_0))$,
			then $E(t_1) \subset \mathrm{int}(F(t_1))$).			
		\end{enumerate}
	 \end{definition}

We are interested in barriers
for the anisotropic mean curvature motion \eqref{gf} 
because they are comparable with level-sets flows,
as the next theorem shows.
Its proof can be found in \cite[theorem 3.2]{bn:comparisonresults}.
For further reading about barriers
for general geometric, local evolution problems,
we refer to that paper and to \cite{bn:someaspects}. 
	\begin{theorem}\label{stm:BN}
		Let  $u$ be the unique solution to \eqref{eq:limit_pb} with initial datum $u_0$ as in \eqref{initialdatum}.
		Let $E^{\pm}_{\lambda}$ the sets defined in \eqref{levelset0}.  
				\begin{enumerate}
			\item The map 
				$[0,T]\ni t\mapsto E^-_\lambda(t)$
				is the minimal outer barrier for
				the family of strict geometric subsolutions
				associated with $H_0$,
				that is $E^-_\lambda(t)$ is an outer barrier
				and $E^-_\lambda(t)\subset E(t)$
				for any other outer barrier $E(t)$.
			\item The map $[0,T]\ni t\mapsto E^+_\lambda(t)$ 
				is the maximal inner barrier for
				the family of  geometric  strict supersolutions
				associated with $H_0$,
				that is $E^+_\lambda(t)$ is an inner barrier
				and $E(t) \subset E^+_\lambda(t)$
				for any other inner barrier $E(t)$.
		\end{enumerate}
	\end{theorem}

Lastly, we  mention a comparison principle
concerning the level-set flow and  strict geometric sub- and supersolutions for the nonlocal problems,
see \cite[Proposition A.10]{cdn:fatteningand}.
	\begin{proposition}\label{stm:comp-CDNV}
		Let $u_\ep: [0,+\infty)\times \Rd\to \R$
		be the viscosity solution to \eqref{eq:eps_pb} with initial datum $u_0$ as in \eqref{initialdatum}. 
		Let $E_{\ep,\lambda}^{\pm}(t)$ 
		be as in \eqref{levelseteps}.
		Then, the evolutions
		$t\mapsto E_{\ep,\lambda}^-(t)$
		and $t\mapsto E_{\ep,\lambda}^+(t)$
		are, respectively,
		an outer barrier w.r.t  geometric  strict subsolutions to \eqref{eq:eps_pb}
		and an inner barrier w.r.t geometric  strict supersolutions to \eqref{eq:eps_pb}.
	\end{proposition}

\section{Convergence of the rescaled nonlocal curvatures}
This section is devoted to the proof of Theorem \ref{stm:main1},
the first main result of the paper.
The argument consists of two steps:
firstly, we deal in Lemma \ref{stm:conv-rate} with
the pointwise convergence of the curvatures,
providing a precise estimate on the error;
then, in Proposition \ref{stm:unif-conv}, we show that
it is possible to make the estimate uniform
when smooth, compact hypersurfaces are considered.

We fix the notations that
we are going to use in the current section.
Let $E\subset \Rd$  be a set of class $C^2$. 
Then for all $x\in \Sigma := \partial E$, 
there exist an open neighborhood $U$ of $x$
and  $\phi\in C^2(U)$
such that
	\begin{equation*}
	\Sigma\cap U = \{y \in U : \phi(y)=0\},
	\quad
	E \cap U =\{y \in U : \phi(y)>0\},
	\end{equation*}
and $\nabla\phi(y)\neq 0$ for all $y\in\Sigma\cap U$.
We write $\hat{n}$
for the outer unit normal to $\Sigma$ at $x$.
Lastly, by the Implicit Function Theorem,
there exist
$\bar\delta:= \bar{\delta}(x)>0$ and 
$f\colon \hat{n}^\perp\cap B(0,\bar\delta) \to (-\bar{\delta},\bar{\delta})$
such that \eqref{eq:graph} and \eqref{eq:epigraph} hold,
and $\inf_{y\in C_{\hat{n}}(x,\bar\delta)}\vass{\nabla\phi(y)}>0$.

\begin{lemma}\label{stm:conv-rate}
Let $E\subseteq \Rd$ be such that $\Sigma:=\partial E$ is of class  $C^2$.
Let $x\in \Sigma$, $\bar{\delta}$, and $f$ be as above, and
let $s\in (0,1)$ be the exponent in \eqref{eq:frac-decay}.
Then, for all $\alpha,\beta\in(0,s)$,
there exist	$q>1$ and $\bar{\ep}\in(0,1)$
such that $q\bar{\ep}\leq\bar{\delta}$
and that, for all $\ep\in(0,\bar{\ep})$ and all $\delta \in(q\ep,\bar{\delta})$,
it holds
		\[
		\vass{ H_\ep(E,x) - H_0(\Sigma,x)} \leq \mathcal{E}(\ep,\delta),
		\]
where			
		\begin{equation}\label{eq:conv-rate} 
			\mathcal{E}(\ep,\delta):=  \,
				\frac{1}{\delta}\left(\frac{\ep}{\delta}\right)^{\alpha}
				+ (b_0+1)\norm{\nabla^2f}^2_{L^\infty(D)}\delta
				+ a_0\,\omega_f(\delta)
				+ \vass{\nabla^2 f(0)} \left(\frac{\ep}{\delta}\right)^\beta,
		\end{equation}
with $D :=  \hat n^\perp\cap B(0,\bar\delta)$
and
	\begin{equation}\label{eq:omegaf}
		\omega_f(\delta):=  \sup_{z\in B(0,\delta)}\vass{\nabla^2 f(z)-\nabla^2 f(0)}.
	\end{equation}
\end{lemma}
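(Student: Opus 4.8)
The strategy is to compute $H_\eps(E,x)$ directly from the definition, exploiting the graph representation \eqref{eq:graph}–\eqref{eq:epigraph} of $\Sigma$ near $x$, and to compare it with the integral expression \eqref{eq:H0} for $H_0(\Sigma,x)$. Without loss of generality (using the invariance \eqref{eq:nl-cng-var} and \eqref{eq:l-cng-var} under rigid motions, and rescaling $K$ accordingly) I would assume $x=0$ and $\hat n=e_d$, so that $\Sigma$ is locally the graph $\{z-f(z)e_d : z\in\hat n^\perp\cap B(0,\bar\delta)\}$ with $f(0)=0$, $\nabla f(0)=0$, and $E$ the epigraph. Since $K\in L^1(B(0,r)^c)$ for every $r>0$, the limit defining $H_K(E,x)$ can be taken along balls, and splitting the domain into the cylinder $C:=C_{\hat n}(0,\bar\delta)$ and its complement, the complement contributes a term controlled by $\int_{B(0,\bar\delta)^c}K_\eps(y)\de y$, which after rescaling $y=\eps w$ equals $\int_{B(0,\bar\delta/\eps)^c}K(w)\de w$; by \eqref{eq:imbert1} (a consequence of \eqref{eq:frac-decay}) this is $o((\eps/\bar\delta)^{\alpha})$, absorbed into the first term of $\mathcal E$.

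The main computation concerns the cylinder part. Using that $K$ is even — exactly as in the proof of Proposition \ref{stm:imbert} — the contribution reduces to
\[
-\frac1\eps\int_{\hat n^\perp\cap B(0,\bar\delta)}\int_{-f(-z)}^{f(z)}K_\eps(z-te_d)\de t\,\de\mathcal H^{d-1}(z),
\]
and after the change of variables $z=\eps\zeta$, $t=\eps\tau$ this becomes $-\eps^{-2}\int\!\!\int K(\zeta-\tau e_d)\de\tau\,\de\mathcal H^{d-1}(\zeta)$ over the region $|\zeta|<\bar\delta/\eps$, $\tau$ between $-f(-\eps\zeta)/\eps$ and $f(\eps\zeta)/\eps$. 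The key point is a second-order Taylor expansion: $f(\eps\zeta)=\tfrac{\eps^2}{2}\nabla^2f(0)\zeta\cdot\zeta+o(\eps^2|\zeta|^2)$, and $f(\eps\zeta)+f(-\eps\zeta)=\eps^2\nabla^2f(0)\zeta\cdot\zeta + \eps^2 R(\eps\zeta)$ where $|R(\eps\zeta)|\le\omega_f(\eps|\zeta|)|\zeta|^2$ with $\omega_f$ as in \eqref{eq:omegaf}. Thus the inner $\tau$-integral has length $\approx\eps\,\nabla^2f(0)\zeta\cdot\zeta$, and formally $\eps^{-2}\cdot(\text{length})\to$ the density computed at $\tau=0$, i.e. $K(\zeta)$ on $\hat n^\perp$. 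Carrying this through, the leading term is $-\int_{\hat n^\perp}K(\zeta)\nabla^2f(0)\zeta\cdot\zeta\,\de\mathcal H^{d-1}(\zeta)$, which — after relating $\nabla^2 f(0)$ to $\pi_{\hat n^\perp}\nabla^2\phi(0)\pi_{\hat n^\perp}/|\nabla\phi(0)|$ — is precisely $H_0(\Sigma,x)$ by \eqref{eq:H0}.

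The error bookkeeping is where the explicit form of $\mathcal E(\eps,\delta)$ comes from, and it is the part that requires care (the main obstacle). I would introduce the intermediate radius $\delta\in(q\eps,\bar\delta)$ to split the $\zeta$-integral at $|\zeta|=\delta/\eps$: on $|\zeta|>\delta/\eps$ the crude bound $|\tau|\le\tfrac\lambda2|\zeta|^2$ (valid by \eqref{ftre}) together with \eqref{eq:imbert5}-type decay — quantitatively, the $\frac1\lambda\int_{Q_\lambda}K$ tail controlled via \eqref{eq:frac-decay} — gives a contribution of order $\tfrac1\delta(\eps/\delta)^\alpha$; this also forces $q$ large enough and $\bar\eps$ small. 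On the bounded region $|\zeta|\le\delta/\eps$ I would write the $\tau$-integral of $K(\zeta-\tau e_d)$ as $[$length$]\cdot K(\zeta)$ plus a remainder; the remainder splits into (i) a piece from the variation of $K$ along the segment, bounded using the fundamental theorem of calculus and $|y||\nabla K(y)|\in L^1(Q_\lambda)$ — this yields the $(b_0+1)\|\nabla^2 f\|_{L^\infty}^2\delta$ term via \eqref{eq:imbert4}, with the square coming from the segment length times the derivative-weight; (ii) a piece from $\omega_f$ replacing the exact Taylor length by $\nabla^2f(0)\zeta\cdot\zeta$, bounded by $a_0\,\omega_f(\delta)$ using \eqref{eq:Kz2}; and (iii) the truncation error from having integrated $K(\zeta)|\zeta|^2$ only over $|\zeta|\le\delta/\eps$ rather than all of $\hat n^\perp$, bounded by $|\nabla^2f(0)|\cdot(\eps/\delta)^\beta$ via the decay estimate \eqref{eq:decad-Kz2}. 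Assembling these four contributions gives exactly \eqref{eq:conv-rate}. The only genuine subtlety beyond careful estimation is ensuring all the rescaled integrals are finite and the limits legitimate, which is guaranteed by assumptions \eqref{eq:imbert2}–\eqref{eq:imbert5}; I would also keep track that the Taylor constant $\lambda$ in \eqref{ftre} may be taken close to $2|\nabla^2 f(0)|$ on small balls, consistent with the $\delta\to0$ behaviour of $\mathcal E$.
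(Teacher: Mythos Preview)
Your plan is essentially the paper's proof: the same reduction to $x=0$, $\hat n=e_d$, the same use of evenness to reduce the cylinder contribution to an integral over the strip between $-f(-z)$ and $f(z)$, and the same four-way error decomposition (far field via \eqref{eq:imbert1}; variation of $K$ across the strip via the fundamental theorem of calculus and \eqref{eq:imbert4}; Taylor remainder $\omega_f$ via \eqref{eq:Kz2}; truncation of the $\hat n^\perp$-integral via \eqref{eq:decad-Kz2}). Two small corrections: first, the Jacobian of $(z,t)=(\eps\zeta,\eps\tau)$ combined with $K_\eps=\eps^{-d}K(\cdot/\eps)$ yields prefactor $\eps^{-1}$, not $\eps^{-2}$ (this is what makes your heuristic ``prefactor $\times$ length $\to K(\zeta)$'' actually work, since the length is $\approx\eps\,\nabla^2f(0)\zeta\cdot\zeta$); second, the paper avoids your extra split at $|\zeta|=\delta/\eps$ by taking the cylinder of radius $\delta$ rather than $\bar\delta$ from the outset, so that the complement is directly contained in $B(0,\delta)^c$ and \eqref{eq:imbert1} gives the $\tfrac1\delta(\eps/\delta)^\alpha$ term in one stroke---your annulus estimate is then unnecessary, and your appeal to ``\eqref{eq:imbert5}-type decay'' there is not the right mechanism (it is really \eqref{eq:imbert1} again).
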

\begin{proof}
We start by observing that,
without loss of generality,
we may assume that $x=0$ and
$\hat{n}=e_d:= (0,\dots,0,1)$. 
 
 The argument is similar to the one
followed to prove estimate \eqref{eq:est-reg-hyp}.
There exists $f\colon D \to (-\bar{\delta},\bar{\delta})$ of class $C^2$
such that $f(0)=0$, $\nabla f(0)=0$, and \eqref{eq:graph} and \eqref{eq:epigraph} hold.  Moreover,
		\begin{gather}
		\partial_i f = \dfrac{\partial_i\phi}{\partial_d \phi}, \\
		\partial^2_{i,j}f= \dfrac{1}{\partial_d \phi}
			\left(\partial^2_{i.j}\phi + \partial_i f\,\partial^2_{j,d}\phi
			+ \partial_j f\,\partial^2_{i,d}\phi + \partial_i f\,\partial_j f\,\partial^2_{d,d}\phi\right) 
			\label{eq:implicit2}
		\end{gather}
for $i,j=1,\dots,d-1$.
Let us introduce  the function
	\[
		f_\ep(z):= \frac{f(\ep z)}{\ep}.
	\]
Since  $f$ is of class $C^2$,
for all $z\in D$ there exists $z'$ such that 
$f_\ep(z)=(\ep \nabla^2 f(\ep z')z\cdot z) / 2$. 	When $t$ ranges between $-f_\ep(-z)$ and $f_\ep(z)$, we thus see that
	\begin{equation}\label{eq:stima-quadr}
	\left| t \right| \leq \frac{\ep}{2}\norm{\nabla^2 f}_{L^\infty(D)}\vass{z}^2.
	\end{equation}

Let us fix  $0<\ep<\delta<\bar{\delta}$.
We split $H_\ep$ into two different contributions:
	\[
		H_\ep(E, 0)
		= I_{\ep}^0+I_\ep^1:=  -\frac{1}{\ep}\int_{C} K_\ep(y)\tilde\chi_E(y)\de y
		- \frac{1}{\ep}\int_{C^c} K_\ep(y)\tilde\chi_E(y)\de y,
	\]
where  $C:= C_{e_d}(0,\delta)$.
The first integral  takes into 	account the interactions
with points that are close to $0$, 
and it approximates the anisotropic mean curvature at $0$
when $\ep$ is small;
the second term encodes the energy stored far away from the origin.
Observe that
	\begin{equation*}
	I^0_\ep =
		\frac{1}{\ep}\int_{e_d^\perp\cap B\left(0,\frac{\delta}{\ep}\right)}
		\int_{-f_\ep(-z)}^{f_\ep(z)} K(z+te_d)\de t\de\mathcal{H}^{d-1}(z).
	\end{equation*}
Let us define
	\[
		J_\ep :=   \frac{1}{\ep}\int_{e_d^\perp\cap B\left(0,\frac{\delta}{\ep}\right)}
				K(z) \left[f_\ep(z)+f_\ep(-z)\right]\de\mathcal{H}^{d-1}(z), 
					\]
and recall that, in view of \eqref{eq:H0},
	\[
		H_0(\Sigma, 0) =  \int_{e_d^\perp} K(z)\nabla^2 f(0)z\cdot z\de\mathcal{H}^{d-1}(z).
	\]	
We consider the chain of inequalities
	\[
		\left| H_\ep(E,0)-H_0(\Sigma,0) \right|
			= \left| I_\ep^0 + I_\ep^1 - H_0(\Sigma, 0) \right|
			\leq \vass{I^0_\ep-J_\ep} + \vass{J_\ep-H_0(\Sigma,0)} + \vass{I^1_\ep},
		\]
and we estimate each term separately. 

We start with $I_\ep^1$.
We remark that,
as a consequence of \eqref{eq:imbert1},
for all $\alpha<s$ there exists $q_1>1$ such that
	\begin{equation}\label{eq:Ieps1}
			\vass{I_\ep^1}= \frac{1}{\ep}\int_{B\left(0,\frac{\delta}{\ep}\right)^c}K(y)\de y\leq \frac{1}{\delta}\left(\frac{\ep}{\delta}\right)^{\alpha}
			\quad\text{whenever } q_1\ep <\delta.
	\end{equation}

We proceed with the other terms.
We observe that 
		\begin{equation}\label{eq:Ieps0Jeps}
		\vass{I^0_\ep-J_\ep} \leq
			\frac{1}{\ep}\int_{e_d^\perp\cap B\left(0,\frac{\delta}{\ep}\right)}
				\vass{\int_{-f_\ep(-z)}^{f_\ep(z)} \left[K(z+te_d)-K(z)\right]\de t}\de\mathcal{H}^{d-1}(z).	
		\end{equation}
By Theorem \ref{stm:acl},
for $\mathcal{H}^{d-1}$-a.e. $z\in e_d^\perp$,
it holds
	\[
		K(z+te_d)-K(z)=\int_0^t \partial_d K(z+se_d) \de s,
	\]
and this,
combined with \eqref{eq:stima-quadr},
implies that
	\begin{equation*}
			\vass{K(z+te_d)-K(z)} \leq
				\int_{-\frac{\ep}{2}\norm{\nabla^2f}_{L^\infty(D)}\vass{z}^2}^{\frac{\ep}{2}\norm{\nabla^2f}_{L^\infty(D)}\vass{z}^2}
					\vass{\nabla K(z+se_d)}\de s.
		\end{equation*}
We plug this inequality in \eqref{eq:Ieps0Jeps} and
we obtain
		\begin{equation*}
		\begin{split}
		& \vass{I^0_\ep-J_\ep} \\ 
		&\qquad \leq\norm{\nabla^2f}_{L^\infty(D)}
				\int_{e_d^\perp\cap B\left(0,\frac{\delta}{\ep}\right)} \vass{z}^2
				\int_{-\frac{\ep}{2}\norm{\nabla^2f}_{L^\infty(D)}\vass{z}^2}^{\frac{\ep}{2}\norm{\nabla^2f}_{L^\infty(D)}\vass{z}^2}
			\vass{\nabla K(z+se_d)}\de s \de\mathcal{H}^{d-1}(z)\\
		&\qquad \leq \norm{\nabla^2f}_{L^\infty(D)}\frac{\delta}{\ep}
				\int_{Q(\ep)}\vass{y}\vass{\nabla K(y)}\de y,
		\end{split}
		\end{equation*}
where  $Q(\ep):=  Q_{\ep\norm{\nabla^2f}_{L^\infty(D)}}(e_d)$. 		
By using \eqref{eq:imbert4}
we get that 
there exists $\eta\in(0,\bar{\delta})$ such that  
	\begin{equation}\label{eq:Ieps0Jeps2}
		\vass{I^0_\ep-J_\ep}\leq (b_0+1)\norm{\nabla^2f}^2_{L^\infty(D)}\delta
		\quad\text{whenever } \ep <\eta.
	\end{equation}
	
Finally, we have		
		\begin{multline*}
			\vass{J_\ep-H_0(\Sigma, 0)} \leq \omega_f(\delta) \int_{e_d^\perp\cap B\left(0,\frac{\delta}{\ep}\right)}
					K(z)\vass{z}^2\de\mathcal{H}^{d-1}(z)
					\\ + \vass{\nabla^2 f(0)}
					\int_{e_d^\perp\cap B\left(0,\frac{\delta}{\ep}\right)^c}
						K(z)\vass{z}^2\de\mathcal{H}^{d-1}(z),
		\end{multline*}
$\omega_f$ being defined in \eqref{eq:omegaf}.
Thanks to \eqref{eq:decad-Kz2},
	for all $\beta<s$, there exists $q_2>0$
	such that, if $q_2\ep <\delta$, then
		\[\left(\frac{\delta}{\ep}\right)^\beta
			\int_{e_d^\perp\cap B\left(0,\frac{\delta}{\ep}\right)^c}
				K(z)\vass{z}^2\de\mathcal{H}^{d-1}(z) \leq 1.
		\]
Recalling \eqref{eq:Kz2}, we thus find
		\begin{equation}\label{eq:JepsJ0}
			\vass{J_\ep-H_0(\Sigma,0)} \leq a_0\,\omega_f(\delta)
				+ \vass{\nabla^2 f(0)} \left(\frac{\ep}{\delta}\right)^\beta
			\quad\text{whenever } q_2\ep <\delta.
		\end{equation}
	
	Now, if we set $q:=  \max\{q_1,q_2\}>1$
	with $q_1$ and $q_2$ as above,
	both \eqref{eq:Ieps1} and \eqref{eq:JepsJ0}
	hold for all $\ep,\delta>0$ such that
	$q\ep < \delta<\bar{\delta}$.
	Besides, if we pick
	$\bar{\ep}:= \min\left\{\eta,\bar{\delta}/ q \right\}$,
	\eqref{eq:Ieps0Jeps2} is satisfied as well
	whenever $\ep<\bar{\ep}$.
	This yields the conclusion. 
\end{proof}

	\begin{remark}
		In the proof of Lemma \ref{stm:conv-rate},
		we  did not exploit assumptions 
	  \eqref{eq:sing-orig} and \eqref{eq:imbert5}. These will be useful in the proof of  
		Proposition \ref{aprioriest}.
	\end{remark}
	
By applying the estimate on the error term given in Lemma \ref{stm:conv-rate},
we deduce the desired uniform convergence. 

\begin{proposition}\label{stm:unif-conv}
Under the same notation and assumptions of  Lemma \ref{stm:conv-rate},
there exists a constant $c:=  c(\alpha,\beta,a_0,b_0)>0$ 
	such that for all $\gamma\in \left(0, \frac{\alpha}{1+\alpha}\right)$,
	it holds
		\begin{multline*} 
		\vass{H_\ep(E,x) - H_0(\Sigma,x)} \leq \\
			c\Bigl(\ep^{\alpha-\gamma(1+\alpha)}
				+ \norm{\nabla^2f}_{L^\infty(D)}\ep^\gamma  
				 +\omega_f(q\ep^\gamma)
				+ \vass{\nabla^2 f(0)} \ep^{(1-\gamma)\beta}\Bigr).
		\end{multline*}
	In particular, if $\Sigma$ is compact,
	the conclusion of Theorem \ref{stm:main1} holds.
\end{proposition}
\begin{proof}
	We start by proving that
	pointwise convergence holds.
	We choose $\gamma\in\left(0, \alpha/(1+\alpha)\right)$ 
	and we observe that, for any $\ep<\bar{\ep}< 1$,
	we have	$q\ep < q\ep^\gamma$.
	We may therefore pick $\delta = q\ep^\gamma$ in \eqref{eq:conv-rate}
	and check that $\mathcal{E}(\ep, q\ep^\gamma)\to 0$
	when $\ep\to 0^+$. 
	The pointwise convergence follows.
	
	Now, we turn to the case
	when $\Sigma$ is compact and of class $C^2$.
	We denote by $\hat{n}_x$ the outer unit normal to $\Sigma$ at $x$
	and by $\hat{n}_x^\perp$ the tangent plane at the same point.
	Let us also define
\[		 		V_\Sigma(\delta):=   \{y\in\Rd : \inf_{z\in\Sigma} |y-z|<\delta\}, \]
	and
		\[\bar\delta :=  \sup\{\delta>0 :
				\text{ the boundary of $V_\Sigma(\delta)$ is of class $C^2$}\}>0.\]
	This ensures that,
	for any $x\in\Sigma$, the implicit function $f$
	defined on $\hat{n}_x^\perp$ 
	ranges in $(-\bar\delta,\bar\delta)$. Let us denote this function by $f_x$
	to stress that it  depends on $x$. 
	There exists $\bar{\ep}<1$ such that
	for all $\ep\in(0,\bar{\ep})$,
	for all $\gamma\in \left(0, \alpha/(1+\alpha)\right)$,
	and	for all $x\in\Sigma$ it holds
		\begin{multline*}
		\vass{ H_\ep(E,x) - H_0(\Sigma,x)}\\ \leq
		c\Bigl(\ep^{\alpha-\gamma(1+\alpha)}
		+ \norm{\nabla^2f_x}_{L^\infty(\hat{n}_x^\perp\cap B(0,\bar{\delta}))}\ep^\gamma 
		  +\omega_{f_x}(q\ep^\gamma)
		+ \vass{\nabla^2 f_x(0)} \ep^{(1-\gamma)\beta}\Bigr).
		\end{multline*}
	Since $\Sigma$ is compact,
	$\vass{\nabla^2 f_x(0)}$ and
	$\norm{\nabla^2 f_x}_{L^\infty(\hat{n}_x^\perp\cap B(0,\bar{\delta}))}$
	are bounded above by the $L^\infty(\Sigma)$-norm
	of the second fundamental form of $\Sigma$;	
also,  there exists a function $\omega_\Sigma$ 
	that vanishes in $0$, that is decreasing
	and that satisfies $\omega_{f_x}(\delta)\leq \omega_\Sigma(\delta)$
	whenever $\delta$ is sufficiently small.
	In conclusion, we obtain an estimate on
	$\vass{  H_\ep(E,x)-H_0(\Sigma,x)}$
	that is uniform in $x$, and the thesis holds.
\end{proof}

\section{A priori estimates for the  rescaled problems} 
In this section we establish a compactness property
for  the family of solutions
to the Cauchy's problems \eqref{eq:eps_pb}.  
Even though the result is known, we sketch its proof,
because it is not explicitly stated in the literature for our setting. 
	\begin{proposition}\label{aprioriest}
		Assume that $u_0\colon \Rd \to\R$ is as in \eqref{initialdatum}, 
		and let $u_\ep$ be the unique continuous viscosity solution to \eqref{eq:eps_pb}. 
		Then, 
			\begin{equation}\label{equilip}
				\vass{u_\ep(t,x)-u_\ep(t,y)}\leq
					\norm{\nabla u_0}_{L^\infty(\Rd)} \vass{x-y}
				\quad \text{for all } t\in [0,T]
					\text{ and } x,y\in \Rd,
			\end{equation}
		and there exists a constant $c>0$ independent of $\ep$ such that
			\begin{equation}\label{equihold}
				\vass{u_\ep(t,x)-u_\ep(s,x)}
					\leq \norm{\nabla u_0}_{L^\infty(\Rd)}\sqrt{c\vass{t-s}}
				\quad \text{for all } t,s\in [0,T]
					\text{ and } x\in \Rd.
		\end{equation}
	\end{proposition}		
	\begin{proof}
	The equi-Lipschitz property \eqref{equilip}
	is a consequence
	of the Lipschitz continuity of the datum and of the comparison principle.
	We skip the proof,
	since it is completely standard and 
	can be found, for instance, in \cite{cdn:fatteningand,dfm:convergenceof}.
	
	For the proof of equi-H\"older continuity, we follow the strategy of Section 5 in \cite{dfm:convergenceof}.
	We point out that, however,
	the case that we treat differs from the one in the reference, 
	mainly because of the possible singularity of our interaction kernel.
	
	We fix $\eta>0$ and $x\in\Rd$
	and we consider
		\begin{equation}\label{test}
			\phi(t,y)= Lt + A\sqrt{\vass{y-x}^2+\eta^2}+u_0(x),
		\end{equation}
	where $A:=:=\norm{\nabla u_0}_{L^\infty(\Rd)}$. 
	We claim that, for $L>0$ sufficiently large,
	$\phi$ is a supersolution to \eqref{eq:eps_pb} for any $\ep\in(0,1)$.
	
	To prove the claim, we remark first of all
	that $\phi(0,y)\geq u_0(y)$
	as a consequence of the Lipschitz continuity of $u_0$.
	Also, we observe that, for any $y\in \Rd$,
		\[\{z\in\Rd : \phi(t,z)\geq \phi(t,y)\}
			= B(x,\vass{y-x})^c.\]
	Hence, to show that $\phi$ is a supersolution,
	it is sufficent to choose $L$ so large that 
		\[\frac{L}{A} \geq \frac{ \vass{y-x}}{\sqrt{\vass{y-x}^2+\eta^2}}H_\ep(B(x,\vass{y-x}),y)
		\quad\text{for all }y\in\Rd \text{ and } \ep\in(0,1).\] 
	Recalling that the nonlocal curvature is invariant under translations,
	if we set $e :=  \widehat{y-x}$ and $r:=  \vass{y-x}$,
	we have that the last inequality holds if and only if
		\begin{equation}\label{eq:unif-supersol}
		\frac{L}{A} \geq \frac{r}{\sqrt{r^2+\eta^2}}H_\ep(B(-re,r),0)
		\quad\text{for all }r>0,e\in\mathbb{S}^{d-1} \text{ and } \ep\in(0,1).
		\end{equation}
So, we are left to prove that there exists 
	$L_0:=  L_0(\eta)>0$ such that
		\begin{equation}\label{sup}
			\sup_{ r>0,\,e\in\mathbb{S}^{d-1}}\sup_{\ep\in(0,1)}
			\frac{r}{ \sqrt{r^2+\eta^2}}H_\ep(B(-re,r),0)
			\leq L_0;
		\end{equation}
	this clearly yields \eqref{eq:unif-supersol} for $L=AL_0$.
	
	To recover estimate \eqref{sup},
	we use inequality \eqref{eq:est-reg-hyp}.
	We get
		\[0 \leq H_\ep(B(-re,r),0)
			\leq \int_{Q_{\frac{\ep}{r}}(e)} K(y)\de y
				+ \int_{B\left(0,\frac{r}{2\ep}\right)^c}K(y)\de y,\]
	and hence
		\begin{equation*}
			\frac{r}{\sqrt{r^2+\eta^2}}H_\ep(B(-re,r),0)
				\leq \frac{r}{\ep\eta}
				\left[\int_{Q_{\frac{\ep}{r}}(e)} K(y)\de y
				+ \int_{B\left(0,\frac{r}{2\ep}\right)^c}K(y)\de y\right].
		\end{equation*}
	By assumptions
	\eqref{eq:sing-orig}, \eqref{eq:imbert5},
	\eqref{eq:imbert3}, and \eqref{eq:imbert1},
	there exist $\lambda,\Lambda>0$ with the following properties:
		\begin{enumerate}
			\item $\lambda<\Lambda$;
			\item if $r<\lambda \ep$, then
					\[\frac{r}{\ep}\int_{Q_{\frac{\ep}{r}}(e)} K(y)\de y\leq \frac{1}{2}
					\quad\text{and}\quad
					\frac{r}{\ep}\int_{B\left(0,\frac{r}{2\ep}\right)^c}K(y)\de y \leq\frac{1}{2}\]
				and, consequently,
					\begin{equation}\label{eq:regime1}
					\frac{r}{\sqrt{r^2+\eta^2}}H_\ep(B(-re,r),0) \leq \frac{1}{\eta};
					\end{equation}
			\item if $r>\Lambda \ep$, then
					\[\frac{r}{\ep}\int_{Q_{\frac{\ep}{r}}(e)} K(y)\de y\leq a_0 + \frac{1}{2}
					\quad\text{and}\quad
					\frac{r}{\ep}\int_{B\left(0,\frac{r}{2\ep}\right)^c}K(y)\de y \leq\frac{1}{2}\]
				and, consequently,
					\begin{equation}\label{eq:regime2}
					\frac{r}{\sqrt{r^2+\eta^2}}H_\ep(B(-re,r),0) \leq \frac{a_0+1}{\eta}.
					\end{equation}
		\end{enumerate}
	Now, only the case
	$\lambda\ep \leq r\leq \Lambda\ep$ is left to discuss.
	In this intermediate regime, 
	recalling \eqref{eq:massa-parabole},
	we easily obtain
		\begin{equation}\label{eq:regime3}
		\frac{r}{\sqrt{r^2+\eta^2}}H_\ep(B(-re,r),0)
			\leq \frac{\Lambda}{\eta}
				\left(c + \int_{B\left(0,\frac{\lambda}{2}\right)^c}K(y)\de y\right),
		\end{equation}
	with $c>0$ depending only on $\lambda$.
	
In view of \eqref{eq:regime1}, \eqref{eq:regime2}, and \eqref{eq:regime3},
there exists a constant
	$c:= c(a_0,\lambda,\Lambda)>0$ such that
		\[\sup_{ r>0,\,e\in\mathbb{S}^{d-1}}\sup_{\ep\in(0,1)}
		\frac{r}{\sqrt{r^2+\eta^2}}H_\ep(B(-re,r),0)
		\leq \frac{c}{\eta},
		\]
	and \eqref{eq:unif-supersol} thus holds
	for the choice $ L = Ac/\eta$.
	
Summing up, we proved  that,
for any fixed $x\in\Rd$, the function
		\[\phi(t,y)=A\left(\frac{c}{\eta}t + \sqrt{\vass{y-x}^2+\eta^2} \right) + u_0(x)\]
	is a supersolution to \eqref{eq:eps_pb} for any $\ep>0$.

By means of an analogous argument
we can prove that,
for all $x\in\R^d$, the function
	\[
		\psi(y):=-A\left(\frac{c}{\eta}t + \sqrt{\vass{y-x}^2+\eta^2} \right) + u_0(x),
	\] 
is a subsolution to \eqref{eq:eps_pb} for any $\ep>0$ and some $c=c(a_0,\lambda,\Lambda)$.

All in all,
thanks to the comparison principle in Theorem \ref{esistenzaeps},
we infer that for all $(t,x)\in[0,T]\times\Rd$ and all $\eta>0$, 
		\[\vass{u_\ep(t,x)-u_0(x)}
			\leq \norm{\nabla u_0}_{L^\infty(\Rd)}\left(\frac{c}{\eta}t + \eta\right).\]
The previous estimates holds for every $\eta$, and hence, by choosing $\eta=\sqrt{ct}$, we get
		\begin{equation}\label{eq:q-equihold}
		\vass{u_\ep(t,x)-u_0(x)}\leq  2\norm{\nabla u_0}_{L^\infty(\Rd)}\sqrt{ct}.
		\end{equation}
Eventually, we deduce \eqref{equihold} from \eqref{eq:q-equihold}
	by combining the facts that
	the problem \eqref{eq:eps_pb} is invariant w.r.t. translations in time,
	that it admits a unique solution, and
	that $\norm{\nabla u_\ep(t,\,\cdot\,)}_{L^\infty(\Rd)}\leq \norm{\nabla u_0}_{L^\infty(\Rd)}$
	for all $t\in[0,T]$.
\end{proof}

\section{Convergence to the solution of the limit problem}
This section is devoted
to the proof of the second main result of the paper, Theorem \ref{stm:main2}. 
Theorem \ref{stm:main1} establishes an asymptotic link
between the rescaled nonlocal curvatures and
the anisotropic mean curvature.
In what follows,
we take advantage of this relationship
to deduce locally uniform convergence
of the viscosity solutions $u_\ep$ of \eqref{eq:eps_pb}
 to the viscosity solution $u$ of \eqref{eq:limit_pb}. 

To achieve the result,
we compare any limit point $v$ of $\{u_\ep\}$
(which Proposition \ref{aprioriest} proves to be a relatively compact family)
with the viscosity solution $u$ to \eqref{eq:limit_pb}. 
More precisely, we focus on the respective superlevel sets, and,
by using  the theory of geometric barriers and their relations with the level-set flows, 
we establish the inclusions \eqref{inclusion1} and \eqref{inclusion2}. 
In turn, these are sufficient to conclude that $v=u$,
thanks to the next lemma. 
	\begin{lemma}\label{stm:incl-suplev}
		Let $f,g:\Rd \to \R$ be  two continuous  functions such that  for all $\lambda \in \R$ there  hold 
			\[\{x\in\Rd : f(x)>\lambda\}\subseteq \{x\in\Rd : g(x)\geq \lambda\}\] and \[ 
		  \{x\in\Rd : g(x)>\lambda\}\subseteq \{x\in\Rd : f(x)\geq \lambda\}.\]
		Then, $f(x)=g(x)$ for all $x\in\Rd$.	
	\end{lemma}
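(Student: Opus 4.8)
The plan is to argue pointwise and by contradiction, exploiting the symmetry of the two hypotheses in $f$ and $g$. I would fix an arbitrary $x\in\Rd$ and suppose, towards a contradiction, that $f(x)\neq g(x)$; interchanging the roles of $f$ and $g$ if necessary, I may assume $f(x)>g(x)$. Then the open interval $\bigl(g(x),f(x)\bigr)$ is nonempty, so I can select some $\lambda\in\R$ with $g(x)<\lambda<f(x)$.

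The contradiction is then immediate. Since $f(x)>\lambda$, the point $x$ lies in $\{y\in\Rd : f(y)>\lambda\}$, and the first inclusion in the hypothesis forces $x\in\{y\in\Rd : g(y)\geq\lambda\}$, i.e. $g(x)\geq\lambda$, contradicting $\lambda>g(x)$. The opposite case $g(x)>f(x)$ is handled in exactly the same way, using the second inclusion: choosing $\lambda$ with $f(x)<\lambda<g(x)$ gives $x\in\{g>\lambda\}\subseteq\{f\geq\lambda\}$, hence $f(x)\geq\lambda>f(x)$, again absurd. Therefore $f(x)=g(x)$, and since $x$ was arbitrary, $f\equiv g$.

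There is essentially no obstacle here: the statement is a one-line interpolation argument, and in fact continuity of $f$ and $g$ plays no role in the proof — it is presumably retained in the statement only because in the intended application $f=u$ and $g$ is a limit point $v$ of the family $u_\eps$, both of which are continuous. The only mild point worth a remark is that one must invoke whichever of the two inclusions matches the sign of $f(x)-g(x)$; this is precisely why both inclusions are assumed, neither alone sufficing to pin down equality.
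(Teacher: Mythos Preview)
Your proof is correct and follows essentially the same pointwise argument as the paper: both fix a point, choose an appropriate level $\lambda$, and read off a contradiction (or inequality) directly from the assumed inclusions. Your remark that continuity is not actually used is accurate.
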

	\begin{proof}
		Let $\bar x\in\Rd$ and
		assume that $g(\bar x)=\lambda$.
		Then,  for all $\mu>0$, we get $\bar x\in \{x : g(x)>\lambda-\mu\}\subseteq  \{x :f(x)\geq \lambda-\mu\}$, which in particular    implies $f(\bar x)\geq \lambda$.
			If    $f(\bar x)>\lambda$, then for some $\mu_0>0$, we would get 
		$\bar x\in \{x :f(x)>\lambda+\mu_0\}\subseteq  \{x : g(x)\geq \lambda+\mu_0>\lambda\}$, in contradiction with the fact that  $g(\bar x)=\lambda$. 
		So  $f(\bar x)=\lambda$.
		By reversing the role of $f$ and $g$,
		we get the conclusion. 
	\end{proof}

Let  $\lambda\in \R$ and  $E^{\pm}_{\ep,\lambda} (t)$ be the  level-set flows associated with the solutions $u_\ep$ to \eqref{eq:eps_pb} defined  in \eqref{levelseteps}.  We introduce the families
$\tilde E^{\pm}_\lambda (t)$,
which are the set-theoretic upper limits of $E_{\ep, \lambda}^\pm(t)$:
	\begin{equation}\label{eq:limsup}
	\tilde E^-_\lambda(t) := \bigcap_{\ep<1} \bigcup_{\eta<\ep} E^{-}_{\eta, \lambda}(t)
	\quad\text{and}\quad
	\tilde E^+_\lambda(t) := \bigcap_{\ep<1} \bigcup_{\eta<\ep} E^{+}_{\eta, \lambda}(t). 	
	\end{equation} 

	\begin{remark}\label{ind}
	It is an immediate consequence of the definition
	that, for any $\bar\ep<1$,
		\begin{equation*}
		\tilde E^-_\lambda(t) = \bigcap_{\ep<\bar\ep}
									\bigcup_{\eta<\ep} E^{-}_{\eta, \lambda}(t)
		\quad\text{and}\quad
		\tilde E^+_\lambda(t) = \bigcap_{\ep<\bar\ep}
									\bigcup_{\eta<\ep} E^{+}_{\eta, \lambda}(t). 	
		\end{equation*}
	\end{remark}

We are ready to discuss the proof
of our convergence result:

\begin{proof}[Proof of Theorem \ref{stm:main2}]
We divide the proof in three steps,
starting with a preliminary observation.
By Proposition \ref{aprioriest},
we know that the family $u_\ep$ is relatively compact
in $C([0,T]\times \Rd)$ and,
consequently, there exist a subsequence $\{u_{\ep_n}\}$
and a function $v\in C([0,T]\times \Rd)$
such that $u_{\ep_n}\to v$ locally uniformly as $\ep\to 0^+$.
We remark that
the conclusion is achieved if we show that $v=u$.
Indeed, since the argument applies
to any converging subsequence of $\{u_\ep\}$,
it follows that
the whole family $\{u_\ep\}$ locally uniformly converges to $u$, as desired.

From now on we reason on a subsequence
that we still denote $\{u_\ep\}$ and
that we suppose to be locally uniformly converging to $v$.

\smallskip
\noindent\textbf{Step 1:
		we claim that   for every $\lambda\in\R$, 
			\begin{equation}\label{inclusion1}
			\{x\in\Rd : v(t,x)>\lambda\} \subseteq 
			\tilde E^-_\lambda(t) \subseteq
			\tilde E^+_\lambda(t) \subseteq 
			\{x\in\Rd : v(t,x)\geq \lambda\}
			\end{equation}
			with $\tilde E^{\pm}_\lambda(t)$ as in \eqref{eq:limsup}.}
		
		In this part of the proof we exploit only the pointwise convergence of $\{u_\ep\}$.
		Without loss of generality,
		we discuss just the case $\lambda = 0$.
		
		Let us fix $\bar x\in \Rd$
		such that $v(t,\bar x) >0$,
		that is, $v(t,\bar x)=\mu$ for some $\mu>0$.
		Since $v$ is the limit of $\{u_\ep\}$,
		there exists $\bar\ep>0$ such that
			\[u_\ep(t,\bar x)\geq \frac{\mu}{2}>0 \quad \text{for all $\ep<\bar\ep$},\]
		and hence $\bar x\in  \tilde E^-_0(t)$.
		This shows that $\{x\in\Rd : v(t,x)>0 \}\subseteq \tilde E^-_0(t)$.
		
		Let us now turn to the inclusion
		$\tilde E^+_0(t)\subseteq \{x\in\Rd : v(t,x)\geq 0 \}$.
		By definition, if $\bar x\in \tilde E^+_0(t)$,
		then for all $\ep<1$ there exists $\eta_\ep<\ep$
		such that $u_{\eta_\ep}(t,\bar x)\geq 0$. 
		Taking the limit $\ep\to 0$,
		we get
			$$v(t, \bar x) = \lim_{\ep\to 0} u_{\eta_\ep}(t,\bar x) \geq 0.$$
		
\smallskip
\noindent\textbf{Step 2: we claim that, for all $\lambda\in \R$, 
			\begin{equation}\label{inclusion2}
				\{x\in\Rd : u(t,x)>\lambda\}\subseteq
				\tilde E^-_\lambda(t)\subseteq
				\tilde E^+_\lambda(t)\subseteq 
				\{x\in\Rd : u(t,x)\geq \lambda\}
			\end{equation}
where $u$ is the viscosity solution to \eqref{eq:limit_pb}}. 

We will firstly show that 
$ \tilde E_{\lambda}^-(t)$ and $\tilde E_{\lambda}^+(t)$ are, respectively,
an outer barrier for the family of strict geometric subsolutions
and an inner barrier  
associated with the flow of $H_0$. 
If these assertions hold true,
then Theorem \ref{stm:BN} immediately entails the conclusion,
because it states that
$\{x\in\Rd : u(t,x)>\lambda\}$ is the minimal outer  barrier
for the family of strict geometric subsolutions, and that $\{x\in\Rd : u(t,x)\geq \lambda\}$ is the maximal inner barrier
for the family of strict geometric supersolutions. 

We prove just that 
$ \tilde E_{0}^-(t)$ is an outer barrier  for
the family of strict geometric subsolutions, since 
the arguments for $\lambda\neq 0$ and $ \tilde E_{0}^+(t)$ are the same. 	
		 
Let us consider, for some $0\leq t_0<t_1\leq T$,
a family  of evolving sets 	$t\mapsto D(t)$  
which is  a strict geometric
		subsolution to the anisotropic mean curvature motion
		when $t\in [t_0, t_1]$.
		Explicitly, we suppose that
		there exists $\ell>0$ such that  
			\begin{equation}\label{eq:strict-geom}
			\partial_t x(t)\cdot \hat{n}_D(t,x(t)) \leq -H_0(\partial D(t),x(t)) -\ell
			\quad \text{for all } t\in (t_0,t_1]\text{ and }x(t)\in \partial D(t),
			\end{equation}
		where $\hat{n}_D$ is 
		the outer unit normal to $D(t)$;
		we assume as well that 
			\begin{equation}\label{eq:incl-tempo0}
			D(t_0) \subset  	\tilde E_{0}^-(t_0).	\end{equation}
		We want to show that $D(t_1) \subset  	\tilde E_{0}^-(t_1)$. 

Recalling definition \eqref{eq:limsup},
we get from \eqref{eq:incl-tempo0} that
for all $\ep<1$ there exists $\eta_\ep\leq \ep$ such that
\begin{equation}\label{ic} D(t_0)\subseteq E_{\eta_\ep, 0}^-(t_0).\end{equation} 
		Since for $t\in [t_0, t_1]$ the second fundamental forms of $\partial D(t)$ are uniformly bounded, we can apply   Theorem \ref{stm:main1} and we deduce that 
			\[
				\lim_{\ep\to 0}H_\ep(D(t),x)=H_0(D(t),x)
					\quad \text{uniformly in } t\in[t_0,t_1]\text{ and }x\in\partial D(t).
			\]
		Consequently,
		there exists $\bar\ep:= \bar\ep(\ell)$ such that,
		for all  $\ep<\bar\ep$,
			\begin{equation*}
			\partial_t x(t)\cdot \hat{n}_D(t,x(t)) \leq -H_\ep(D(t),x(t)) -\frac{\ell}{2}
			\quad \text{for all } t\in (t_0,t_1]\text{ and }x(t)\in \partial D(t),
			\end{equation*}
		or, in other words,  
		$t\mapsto D(t)$ 
		is a strict geometric subsolution to
		all the rescaled problems of parameter $\ep\in(0,\bar\ep)$.
		By  \eqref{ic} and Proposition \ref{stm:comp-CDNV}, we obtain that
		for all $\ep<\bar\ep$ there exists $\eta_\ep\leq \ep$ such that 
			\[D(t)\subset E_{\eta_\ep,0}^-(t)\quad \text{for all } t\in[t_0, t_1].\]
 		We take advantage of  Remark \ref{ind} to deduce from the previous inclusion that
 			\[
 			D(t)\subseteq \tilde E_{0}^-(t)\quad 
				  \text{for all } t\in[t_0,t_1].
			\]		
		In particular, we conclude that $D(t_1)\subseteq \tilde E_{0}^-(t_1)$, as desired. 			
	
\smallskip
\noindent\textbf{Step 3: we conclude $v=u$.}
 
By 	\eqref{inclusion1} and \eqref{inclusion2},
		we deduce that,	for every $\lambda\in\R$ and $t\in [0, T]$,
			\[
			\{x\in\Rd : v(t,x)>\lambda\} \subseteq \{x\in\Rd : u(t,x)\geq\lambda\},\]\[
			\{x\in\Rd : u(t,x)>\lambda\} \subseteq \{x\in\Rd : v(t,x)\geq\lambda\}.
			\]
		The proof is thus accomplished by applying Lemma \ref{stm:incl-suplev}. 
	\end{proof} 
\small

\end{document}